\newcommand{\Z}{\mathbb{Z}}
\newcommand{\supp}{{\rm supp \,}}
\newtheorem{theorem}{Theorem}
\newtheorem{lemma}[theorem]{Lemma}
\newtheorem{proposition}[theorem]{Proposition}
\newtheorem{remark}[theorem]{Remark}
\newtheorem{example}{Example}
\newtheorem{corollary}[theorem]{Corollary}
\newtheorem{lemma-app}[theorem]{Lemma}
\newtheorem{proposition-app}[theorem]{Proposition}
\newlength{\fullboxwidth}
\DeclareMathAlphabet{\mathpzc}{OT1}{pzc}{m}{it}
\begin{document}

\title{Time-frequency shift invariance of Gabor spaces generated by integer lattices}
\author{Carlos Cabrelli, Dae Gwan Lee, Ursula Molter, G\"otz E.~Pfander}

\address{\textrm{(C.~Cabrelli)}
Departamento de Matem\'atica,
Facultad de Ciencias Exac\-tas y Naturales,
Universidad de Buenos Aires, Ciudad Universitaria, Pabell\'on I,
1428 Buenos Aires, Argentina and
IMAS/CONICET, Consejo Nacional de Investigaciones
Cient\'ificas y T\'ecnicas, Argentina.}
\email{cabrelli@dm.uba.ar}

\address{\textrm{(D.G.~Lee)}
Lehrstuhl f\"ur Mathematik - Wissenschaftliches Rechnen, Mathematisch-Geographische Fakult\"at, Katholische Universit\"at Eichst\"att-Ingolstadt, 85071 Eichst{\"a}tt, Germany.}
\email{daegwans@gmail.com}

\address{\textrm{(U.~Molter)}
Departamento de Matem\'atica,
Facultad de Ciencias Exac\-tas y Naturales,
Universidad de Buenos Aires, Ciudad Universitaria, Pabell\'on I,
1428 Buenos Aires, Argentina and
IMAS/CONICET, Consejo Nacional de Investigaciones
Cient\'ificas y T\'ecnicas, Argentina.}
\email{umolter@dm.uba.ar}

\address{\textrm{(G.E.~Pfander)}
Lehrstuhl f\"ur Mathematik - Wissenschaftliches Rechnen, Mathematisch-Geographische Fakult\"at, Katholische Universit\"at Eichst\"att-Ingolstadt, 85071 Eichst{\"a}tt, Germany.}
\email{pfander@ku.de}

\thanks{The research of
C.~Cabrelli and U.~Molter are partially supported by
Grants  PICT 2011-0436 (ANPCyT), PIP 2008-398 (CONICET).
D.G.~Lee and G.E.~Pfander acknowledge support by the DFG Grants PF 450/6-1 and PF 450/9-1, and would like to thank Andrei Caragea for helpful discussions.
}

%%% \date{}
\begin{abstract}
We study extra time-frequency shift invariance properties of Gabor spaces. For a Gabor space generated by an integer lattice, we state and prove several characterizations for its time-frequency shift invariance with respect to a finer integer lattice.
The extreme cases of full translation invariance, full modulation invariance, and full time-frequency shift invariance are also considered. The results show a close analogy with the extra translation invariance of shift-invariant spaces.
%%% , however, presents subtle but deep differences, due to the non-commutativity of the time-frequency operations.
{\bigskip\newline \noindent} {{\small \textsc{Key words:
Extra time-frequency shift invariance, Gabor space, Time-frequency analysis, Shift-invariant space}}}

{\bigskip\noindent {\small \textsc{AMS Subject Classification
2010: 42C40, 42C15, 46C99}}}
\end{abstract}

\maketitle

\section{Introduction}
\label{sec:intro}

The time-frequency structured systems that are complete in the space of square integrable functions play a fundamental role in applied harmonic analysis. Systems that span a proper subspace are relevant, for example in communications engineering, and many aspects of these have been studied from an application oriented point of view.
From a more mathematical, structure oriented point of view, many aspects remain to be explored.

An interesting question regarding subspaces spanned by time-frequency structured systems is whether they are invariant under time-frequency shifts other than those pertaining to their defining property.
To state the question formally, we define unitary operators, translation $T_u:L^2(\mathbb{R}^d) \rightarrow L^2(\mathbb{R}^d)$, $T_{u} f(x)=f(x-u)$, modulation $M_{\eta}:L^2(\mathbb{R}^d)\rightarrow L^2(\mathbb{R}^d)$, $M_{\eta} f(x) = e^{2\pi i \eta \cdot x} f(x)$, and time-frequency shift $\pi(u,\eta) = M_{\eta} T_u$, where $u, \eta \in \mathbb{R}^d$. For $\varphi \in L^2(\mathbb{R}^d)$ and $\Lambda$ an additive closed subgroup of $\mathbb{R}^{2d}$, we define the time-frequency structured Gabor system $(\varphi, \Lambda) = \{ \pi(u,\eta) \varphi \, : \, (u,\eta) \in \Lambda \}$ and the respective Gabor space $\mathcal{G}(\varphi, \Lambda) = \overline{\mathrm{span}} \{ \pi(u,\eta) \varphi \, : \, (u,\eta) \in \Lambda \}$. Note that, by definition, $\mathcal{G}(\varphi, \Lambda)$ is invariant under time-frequency shift by elements in $\Lambda$, that is, $\pi(u,\eta) f \in \mathcal{G}(\varphi, \Lambda)$ for all $(u,\eta) \in \Lambda$ and $f \in \mathcal{G}(\varphi, \Lambda)$. The question is then, given $(u_0,\eta_0) \notin \Lambda$, what conditions on $\varphi$ are necessary and sufficient for the space $\mathcal{G}(\varphi, \Lambda)$ to be invariant under $\pi(u_0,\eta_0)$?

This question is motivated by the work \cite{ACHKM10} which treats the case of shift-invariant spaces. Extra translation invariance of shift-invariant spaces in $L^2(\mathbb{R}^d)$ is characterized for the single variable case ($d=1$) in \cite{ACHKM10}, and later, for the multivariable case ($d \geq 2$) in \cite{ACP11}.

While only translations are of concern for invariance of shift-invariant spaces, in the case of Gabor spaces one needs to consider translations, modulations and also their combinations (i.e., time-frequency shifts). What makes the invariance properties of Gabor spaces even more difficult to analyze is the fact that time-frequency shifts do not commute. In this paper, we restrict our attention to integer time-frequency lattices in which case all time-frequency shifts do commute.

Some related works are the following.
In \cite{Bow07}, structural properties of Gabor spaces are studied in close analogy with those of shift-invariant spaces. In particular, characterizations for Gabor spaces are given in terms of range functions, analogously to the characterizations for shift-invariant spaces in \cite{bdr2}.
In \cite{CMP16}, time-frequency shift invariance of Gabor spaces is studied in the context of the Amalgam Balian-Low theorem.
The Amalgam Balian-Low Theorem asserts that there is no Gabor system which is a Riesz basis for $L^2(\mathbb{R}^d)$ and at the same time its window function has good time-frequency localization.
As a generalization of this theorem, \cite{CMP16} showed that if a Gabor system generated by a rational lattice and a window function having good decay in time and frequency is a Riesz basis for the Gabor space it spans, then the Gabor space cannot be invariant under time-frequency shifts by elements not in the generating lattice.

In this paper, we mainly focus on extra invariance of Gabor spaces $\mathcal{G}(\varphi, \Lambda)$ where $\varphi \in L^2(\mathbb{R}^d)$ and $\Lambda \subset \mathbb{R}^{2d}$ is an integer lattice, i.e., a lattice contained in $\mathbb{Z}^{2d}$.
When $\Lambda \subsetneq \widetilde{\Lambda} \subseteq \mathbb{Z}^{2d}$, we give complete characterizations for the $\widetilde{\Lambda}$-invariance of $\mathcal{G}(\varphi, \Lambda)$, which turn out to have close analogy with the case for shift-invariant spaces.
A major difference from the shift-invariance space case is that, as often in time-frequency analysis, the Zak transform is employed in place of the Fourier transform. Through the Zak transform, time-frequency shifts are represented on the time-frequency plane and are therefore easier to access than when the Fourier transform is used.
By scaling the Zak transform, the results obtained generalize to the case $\Lambda \subsetneq \widetilde{\Lambda} \subseteq \alpha \mathbb{Z}^d \times \tfrac{1}{\alpha} \mathbb{Z}^d$ where $\alpha > 0$.
We also consider some extreme cases where $\Lambda = \mathbb{Z}^{2d}$ and $\widetilde{\Lambda} = \mathbb{R}^d \times \mathbb{Z}^d$, $\mathbb{Z}^d \times \mathbb{R}^d$, $\mathbb{R}^{2d}$, each of which corresponds to full translation invariance, full modulation invariance, and full time-frequency shift invariance, respectively.

This paper is organized as follows.
Section \ref{sec:prelim} contains some notations and definitions which will be used throughout the paper.
In Section \ref{sec:SIS}, we review some results on extra invariance of shift-invariant spaces.
In Section \ref{sec:Gabor_sp}, motivated from the case for shift-invariant spaces, we state and prove analogous characterizations for extra invariance of Gabor spaces.
An example is given to illustrate our results.

\medskip

\section{Preliminaries}
\label{sec:prelim}

The \emph{Fourier transform} is defined on $L^{1}(\mathbb{R}^d) \cap L^{2}(\mathbb{R}^d)$ by
\begin{align*}
\mathcal{F}[ f ](\xi) = \widehat{f}(\xi) := \int_{\mathbb{R}^d} f(x) \, e^{- 2 \pi i \xi \cdot x} \, dx , \quad f \in L^{1}(\mathbb{R}^d) \cap L^{2}(\mathbb{R}^d) ,
\end{align*}
so that $\mathcal{F}[\cdot]$ extends to a unitary operator from
$L^{2}(\mathbb{R}^d)$ onto $L^{2}(\mathbb{R}^d)$.
The \emph{Zak transform} is densely defined on $L^2 (\mathbb{R}^d)$ by
$$
Z f ( x , \omega ) = \sum_{k \in \mathbb{Z}^d} f (x + k) \, e^{-2 \pi i k \cdot \omega} \;\; \in \; L^2 ([0,1)^{2d}) ,
$$
which is quasi-periodic in the sense that
\begin{align*}
Z f ( x + k , \omega + \ell )
= e^{2 \pi i k \cdot \omega} \, Z f (x, \omega)
\quad \text{for all} \; k , \ell \in \mathbb{Z}^d .
\end{align*}
The mapping $f \mapsto Z f$ is a unitary map from $L^2(\mathbb{R}^d)$ onto $L^2 ([0,1)^{2d})$, where the functions in $L^2 ([0,1)^{2d})$ are understood to be quasi-periodic on $\mathbb{R}^{2d}$.

From the commutation relations $T_{u} M_{\eta} = e^{-2 \pi i u \cdot \eta} \, M_{\eta} T_{u}$, $u, \eta \in \mathbb{R}^d$, we have
\begin{align}
\label{eqn_commut_rel}
\pi(u,\eta) \circ \pi(u',\eta') = e^{2 \pi i (\eta \cdot u' - u \cdot \eta')} \, \pi(u',\eta') \circ \pi(u,\eta)
\quad \text{for} \;\; (u,\eta), (u',\eta') \in \mathbb{R}^d \times \mathbb{R}^d .
\end{align}
For any $u, \eta \in \mathbb{R}^d$ and $f \in L^2(\mathbb{R}^d)$, we have
\begin{align}
\big( Z \pi (u,\eta) f \big) (x, \omega)
&= \sum_{k \in \mathbb{Z}^d} (\pi (u,\eta) f )(x+ k ) \, e^{-2 \pi i k \cdot \omega } = \sum_{k \in \mathbb{Z}^d} e^{2 \pi i \eta \cdot (x + k) } f (x+ k -u) \, e^{-2 \pi i k \cdot \omega } \label{Zak_on_pi} \\
&= e^{2 \pi i \eta \cdot x } \, Z f ( x - u , \omega - \eta) . \nonumber
\end{align}
By the quasi-periodicity of Zak transform, it follows that for $u,\eta \in \mathbb{Z}^d$,
\begin{align}
\label{Zak_on_pi_quasi_peri}
\big( Z \pi (u,\eta) f \big) (x, \omega)
= e^{2 \pi i (\eta \cdot x - u \cdot \omega)} \, Z f ( x , \omega ) .
\end{align}

A \emph{(full rank) lattice} $\Gamma$ in $\mathbb{R}^d$ is a discrete subgroup of $\mathbb{R}^d$ represented by $\Gamma = A \, \mathbb{Z}^d$ for some $A \in GL(d,\mathbb{R})$, where $GL(d,\mathbb{R})$ denotes the general linear group of degree $d$ over $\mathbb{R}$.
We will consider lattices in $\mathbb{R}^d$ for collections of time elements $u \in \mathbb{R}^d$, and lattices in $\mathbb{R}^{2d}$ for collections of time-frequency elements $(u,\eta) \in \mathbb{R}^d \times \mathbb{R}^d$.
We reserve the letter $\Gamma$ for lattices in $\mathbb{R}^d$ and $\Lambda$ for lattices in $\mathbb{R}^{2d}$.
In many cases, \emph{separable lattices} of the form $\Lambda = A \mathbb{Z}^d \times B \mathbb{Z}^d \subset \mathbb{R}^{2d}$, where $A , B \in GL(d,\mathbb{R})$, are considered.
We write $\Lambda = \alpha \mathbb{Z}^d \times \beta \mathbb{Z}^d$ in the case where $A = \alpha I$ and $B = \beta I$, $\alpha, \beta > 0$.

For $\varphi \in L^2(\mathbb{R}^d)$ and an additive closed subgroup $\Lambda \subset \mathbb{R}^{2d}$, let $(\varphi, \Lambda) = \{ \pi(u,\eta) \varphi \, : \, (u,\eta) \in \Lambda \}$ and $\mathcal{G}(\varphi, \Lambda) = \overline{\mathrm{span}} \{ \pi(u,\eta) \varphi \, : \, (u,\eta) \in \Lambda \}$ be the \emph{Gabor system} and \emph{Gabor space}, respectively.
For $\varphi \in L^2(\mathbb{R}^d)$ and an additive closed subgroup $\Gamma \subset \mathbb{R}^d$, let $\mathcal{S}(\varphi, \Gamma) = \mathcal{G}(\varphi, \Gamma \times \{ 0 \}) = \overline{\mathrm{span}} \{ T_{u} \varphi \, : \, u \in \Gamma \}$, in particular, $\mathcal{S}(\varphi, \mathbb{Z}^d)$ is called the \emph{shift-invariant space (SIS)} generated by $\varphi$.

Let $V$ be a closed subspace of $L^2(\mathbb{R}^d)$.
Given $(u,\eta) \in \mathbb{R}^d \times \mathbb{R}^d$, we say that $V$ is \emph{invariant under time-frequency shift by $(u,\eta)$} if $\pi(u,\eta) f \in V$ for all $f \in V$.
Given a subset $\Lambda \subset \mathbb{R}^{2d}$, we say that $V$ is \emph{$\Lambda$-invariant} if $\pi(u,\eta) f \in V$ for all $(u,\eta) \in \Lambda$ and $f \in V$.
Given a subset $\Gamma \subset \mathbb{R}^d$, we say that $V$ is \emph{$\Gamma$-invariant} if it is $\Gamma \times \{0\}$-invariant.
We say that $V$ is \emph{shift-invariant} if it is $\mathbb{Z}^d$-invariant, i.e., $\mathbb{Z}^d \times \{0\}$-invariant.

We define the \emph{time invariance set} of $V$ as
\begin{align*}
\mathcal{T} (V) = \{ u \in \mathbb{R}^d \, : \, T_{u} f \in V ~ \text{for all} ~ f \in V \} .
\end{align*}
If $V$ is shift-invariant, then $\mathcal{T} (V)$ is an additive closed subgroup of $\mathbb{R}^d$ containing $\mathbb{Z}^d$ (Proposition 2.1 in \cite{ACP11}).
Similarly, we define the \emph{time-frequency invariance set} of $V$ as
\begin{align*}
\mathcal{P} (V) = \{ (u,\eta) \in \mathbb{R}^d \times \mathbb{R}^d \, : \, \pi(u,\eta) f \in V ~ \text{for all} ~ f \in V \} .
\end{align*}
If $V$ is $\Lambda$-invariant where $\Lambda \subset \mathbb{R}^{2d}$ is a lattice, then $\mathcal{P} (V)$ is an additive closed subgroup of $\mathbb{R}^{2d}$ containing $\Lambda$ (see Proposition \ref{prop_M_is_closed} in Appendix I).
Thus, if $\mathcal{P} (V)$ contains a lattice $\Lambda \subset \mathbb{R}^{2d}$ and a subset $S \subset \mathbb{R}^{2d}$, then $\mathcal{P} (V)$ contains the smallest additive closed subgroup of $\mathbb{R}^{2d}$ generated by $\Lambda$ and $S$.

\medskip

\section{Shift-Invariant Spaces}
\label{sec:SIS}

As preparation to our analysis on extra invariance of Gabor spaces, we collect some results in shift-invariant spaces.
Extra invariance of shift-invariant spaces in $L^2(\mathbb{R}^d)$ is completely characterized in \cite{ACHKM10} for $d=1$ and in \cite{ACP11} for $d \geq 2$.
We remark that extending single variable results to the multivariate setting is not easily done: the variety of closed subgroups of $\mathbb{R}^d$ for $d \geq 2$ is more complex than in the case $d=1$ where the only possible closed subgroups containing $\Z$, are $\mathbb{R}$ and $\tfrac{1}{n} \mathbb{Z}$, $n \in \mathbb{N}$.

\subsection{Fourier transform characterization of shift-invariant spaces}
\label{subsec:SIS_representation}
$ $

Functions belonging to a shift-invariant space can be characterized using the Fourier transform.
For this we need to recall the notion of dual lattice.
For an additive subgroup $\Gamma$ of $\mathbb{R}^d$,
its \emph{annihilator} is the additive closed subgroup of $\mathbb{R}^d$ given by
$${\Gamma}^{*} = \{ \omega \in \mathbb{R}^d \, : \, e^{- 2 \pi i \gamma \cdot \omega} = 1
\quad \text{for all} \; \gamma \in \Gamma \}.$$
Note that $({\Gamma}^{*})^{*} = \overline{\Gamma}$ (the closure of $\Gamma$ in the standard topology of $\mathbb{R}^d$) and that ${(\Gamma')}^{*} \subset {\Gamma}^{*}$ if $\Gamma \subset \Gamma'$.
If $\Gamma \subset \mathbb{R}^d$ is a (full rank) lattice, then so is ${\Gamma}^{*}$ which is then called the \emph{dual lattice} of $\Gamma$.
If $\Gamma = A \, \mathbb{Z}^d$ where $A \in GL(d,\mathbb{R})$, then ${\Gamma}^{*} = (A^{-1})^T \, \mathbb{Z}^d$.
In particular, ${(c_1 \mathbb{Z} \times \ldots \times c_d \mathbb{Z})}^{*} = \tfrac{1}{c_1} \mathbb{Z} \times \ldots \times \tfrac{1}{c_d} \mathbb{Z}$ where $c_1, \ldots , c_d > 0$.

\begin{lemma}[Theorem 4.3 in \cite{ACP11}]
\label{lem_thm_4_3_in_ACP11}
Let $\varphi \in L^2(\mathbb{R}^d)$ and let $\Gamma$ be an additive closed subgroup of $\mathbb{R}^d$. Then $f \in L^2(\mathbb{R}^d)$ belongs in $\mathcal{S}(\varphi, \Gamma)$ if and only if there exists a ${\Gamma}^{*}$-periodic measurable function $m (\xi)$ such that $\widehat{f}(\xi) = m(\xi) \, \widehat{\varphi}(\xi)$.
\end{lemma}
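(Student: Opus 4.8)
The plan is to pass to the Fourier side, where the problem becomes a statement about multiplication operators. Since $\mathcal{F}$ is unitary and $\widehat{T_{u}\varphi}(\xi) = e^{-2\pi i u \cdot \xi}\,\widehat{\varphi}(\xi) = e_{u}(\xi)\,\widehat{\varphi}(\xi)$, writing $e_{u}(\xi) = e^{-2\pi i u \cdot \xi}$, we have $f \in \mathcal{S}(\varphi,\Gamma)$ if and only if $\widehat f$ lies in the closed subspace $\overline{\mathrm{span}}\{ e_{u}\,\widehat\varphi : u \in \Gamma\}$ of $L^{2}(\mathbb{R}^d)$. The first remark is that $e_{u}$ is itself ${\Gamma}^{*}$-periodic exactly when $e^{-2\pi i u \cdot \gamma^{*}} = 1$ for all $\gamma^{*} \in {\Gamma}^{*}$, i.e. when $u \in ({\Gamma}^{*})^{*} = \overline{\Gamma} = \Gamma$; so the candidate multipliers $m$ in the statement are precisely the ${\Gamma}^{*}$-periodic functions, and the exponentials $e_{u}$, $u \in \Gamma$, are exactly the ${\Gamma}^{*}$-periodic characters. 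I therefore want to identify $\overline{\mathrm{span}}\{ e_{u}\,\widehat\varphi : u \in \Gamma\}$ with the set $\{ m\,\widehat\varphi : m \text{ is } {\Gamma}^{*}\text{-periodic measurable}, \ m\,\widehat\varphi \in L^{2}(\mathbb{R}^d) \}$.

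The key tool I would use is a \emph{bracket} (periodization) isometry. Let $G = \mathbb{R}^d / {\Gamma}^{*}$ and define the ${\Gamma}^{*}$-periodic weight $w$ by periodizing $|\widehat\varphi|^{2}$ over ${\Gamma}^{*}$; concretely, when ${\Gamma}^{*}$ is a lattice this is $w(\xi) = \sum_{\gamma^{*}\in {\Gamma}^{*}} |\widehat\varphi(\xi+\gamma^{*})|^{2}$, viewed as a function on $G$, and in general the sum is replaced by integration over ${\Gamma}^{*}$ via Weil's quotient-integration formula. Fubini/Weil then gives, for every ${\Gamma}^{*}$-periodic $m$, the identity
\[
\int_{\mathbb{R}^d} |m(\xi)|^{2}\,|\widehat\varphi(\xi)|^{2}\,d\xi = \int_{G} |m(\dot\xi)|^{2}\,w(\dot\xi)\,d\dot\xi ,
\]
so that $\|m\,\widehat\varphi\|_{L^{2}(\mathbb{R}^d)} = \|m\|_{L^{2}(G,\,w\,d\dot\xi)}$. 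In other words, $m \mapsto m\,\widehat\varphi$ is an isometry of the Hilbert space $L^{2}(G, w\,d\dot\xi)$ onto a (necessarily closed) subspace of $L^{2}(\mathbb{R}^d)$, under which the character $e_{u}$ corresponds to $\dot\xi \mapsto e^{-2\pi i u\cdot\xi}$ on $G$. The image of this isometry is exactly the set $\{ m\,\widehat\varphi\}$ appearing in the statement, since $m\,\widehat\varphi \in L^{2}(\mathbb{R}^d)$ forces $m \in L^{2}(G, w\,d\dot\xi)$ and conversely.

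With the isometry in hand, the lemma reduces to the single assertion that the characters $\{ e_{u} : u \in \Gamma\}$ are complete in $L^{2}(G, w\,d\dot\xi)$: for then their closed linear span is all of $L^{2}(G,w\,d\dot\xi)$, and applying the isometry turns this equality into $\overline{\mathrm{span}}\{ e_{u}\widehat\varphi\} = \{ m\widehat\varphi\}$, which is precisely both implications at once. Completeness I would prove by a duality/uniqueness argument: if $g \in L^{2}(G, w\,d\dot\xi)$ is orthogonal to every $e_{u}$, then (using $\int_{G} w = \|\varphi\|^{2} < \infty$, so that $\overline g\, w \in L^{1}(G)$) the Fourier transform of the finite measure $\overline{g}\,w\,d\dot\xi$ vanishes on the dual group $\widehat G \cong \Gamma$, whence the measure is zero and $g = 0$ $w$-a.e. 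This argument is uniform across all closed subgroups $\Gamma$; for a full-rank lattice $\Gamma$ it is just the density of trigonometric polynomials on the torus $G$.

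The step I expect to require the most care is not the functional-analytic skeleton above but the measure-theoretic bookkeeping that makes it rigorous for an \emph{arbitrary} closed subgroup $\Gamma$. Two points deserve attention. First, on the set where the periodization $w$ vanishes — equivalently where $\widehat\varphi$ vanishes along an entire ${\Gamma}^{*}$-coset — the multiplier $m$ is unconstrained, so I must select a measurable ${\Gamma}^{*}$-periodic representative (for instance extending $m$ by $0$ there) to produce the function promised in the statement while keeping $\widehat f = m\,\widehat\varphi$ almost everywhere. Second, when ${\Gamma}^{*}$ is not discrete (i.e. $\Gamma$ is not a full-rank lattice) the periodization and the displayed identity must be justified through Weil's formula with compatibly normalized Haar measures on ${\Gamma}^{*}$, $G$ and $\mathbb{R}^{d}$, and one must check that $w$ is a well-defined, locally integrable function on $G$ so that $L^{2}(G, w\,d\dot\xi)$ and the finite measure $\overline g\, w\, d\dot\xi$ make sense. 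Once these are settled the argument closes without further difficulty.
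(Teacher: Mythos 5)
Your argument is correct, but note first that the paper does not actually prove this lemma: it is imported verbatim from [ACP11] (Theorem 4.3), with the remark that the lattice case goes back to [BDR94]. The closest in-paper argument is the Appendix II proof of the Zak-transform analogue (Lemma \ref{lem_Gabor_ftn_exp_lambda_integer}), and comparing against that shows you are working in the same circle of ideas --- periodization/fiberization of $|\widehat\varphi|^2$ over the dual object --- but with a genuinely different organization. The paper's route writes down the candidate multiplier explicitly, $h_f = [f,\varphi]_{\Lambda^\circ}/[\varphi,\varphi]_{\Lambda^\circ}$, and verifies by hand that $Zf \mapsto h_f\,Z\varphi$ is the orthogonal projection onto the Gabor space (identity on the generators, zero on the orthogonal complement); the "if" direction then follows by computing $h_f$ for an arbitrary admissible $h$. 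You instead package the periodization into an isometry of $L^2(G, w\,d\dot\xi)$, $G=\mathbb{R}^d/\Gamma^*$, onto the set $\{m\,\widehat\varphi\}$, and reduce both implications simultaneously to completeness of the characters $\{e_u : u\in\Gamma\}\cong\widehat G$ in that weighted space, proved via uniqueness of Fourier--Stieltjes transforms of the finite measure $\overline g\,w\,d\dot\xi$. What your version buys is precisely the generality the lemma needs and the paper's Appendix II argument does not supply: it treats an arbitrary closed subgroup $\Gamma$ (non-discrete $\Gamma^*$, Weil integration in place of the sum) uniformly, whereas the bracket-as-finite-sum computation is tied to the lattice case. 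What the projection route buys is an explicit formula for $m$ and for the projection itself, which the paper later reuses (e.g.\ in the periodization identities (\ref{eqn_peri_trick1})--(\ref{eqn_peri_trick2})). The two technical points you flag --- choosing a measurable $\Gamma^*$-periodic representative of $m$ on $\{w=0\}$ (where $\widehat\varphi$ vanishes a.e.\ anyway), and the compatible normalization of Haar measures in Weil's formula --- are indeed the only places requiring care, and your identification $\widehat G\cong(\Gamma^*)^*=\overline\Gamma=\Gamma$ correctly uses the hypothesis that $\Gamma$ is closed. I see no gap.
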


Note that $\Gamma \subseteq \mathbb{R}^d$ in Lemma \ref{lem_thm_4_3_in_ACP11} is not necessarily discrete.
Lemma \ref{lem_thm_4_3_in_ACP11} was proved in \cite{bdr} for the case where $\Gamma$ is a lattice.

\medskip

\subsection{Extra invariance of shift-invariant spaces}
\label{subsec:SIS_extra_inv}
$ $

While invariance of shift-invariant spaces is concerned with translations only, invariance of Gabor spaces concerns with both translations and modulations.
For this reason, invariance sets associated with shift-invariant spaces and Gabor spaces are subsets of $\mathbb{R}^d$ and $\mathbb{R}^{2d}$ respectively.
To compare these sets, we need to match their ambient space dimensions.
Thus, we will consider shift-invariant spaces in $L^2(\mathbb{R}^{2d})$ and Gabor spaces in $L^2(\mathbb{R}^{d})$ so that their invariance sets are subsets of $\mathbb{R}^{2d}$.

In \cite{ACP11}, extra invariance of shift-invariant spaces in $L^2(\mathbb{R}^{2d})$ is completely characterized, more precisely, the paper characterizes the $\widetilde{\Gamma}$-invariance of shift-invariant spaces where $\widetilde{\Gamma} \subset \mathbb{R}^{2d}$ is an arbitrary closed subgroup containing $\mathbb{Z}^{2d}$. To compare with the case for Gabor spaces, we state the result when $\widetilde{\Gamma} \subset \mathbb{R}^{2d}$ is a (full rank) lattice containing $\mathbb{Z}^{2d}$.

Note that a closed subgroup of $\mathbb{R}^{2d}$ which contains $\mathbb{Z}^{2d}$ and an element in $\mathbb{R}^{2d} \backslash \mathbb{Q}^{2d}$, is non-discrete.
This implies that every lattice $\widetilde{\Gamma} \subset \mathbb{R}^{2d}$ containing $\mathbb{Z}^{2d}$ is a rational lattice, which is a lattice consisting of rational elements only.
In fact, any lattice $\widetilde{\Gamma} \subset \mathbb{R}^{2d}$ containing $\mathbb{Z}^{2d}$ satisfies $\mathbb{Z}^{2d} \subseteq \widetilde{\Gamma} \subseteq \frac{1}{m} \mathbb{Z}^d \times \frac{1}{n} \mathbb{Z}^d$ for some (possibly large) $m, n \in \mathbb{N}$.
Note that its dual lattice $\widetilde{\Gamma}^* \subset \mathbb{R}^{2d}$ satisfies $m \mathbb{Z}^d \times n \mathbb{Z}^d \subseteq \widetilde{\Gamma}^* \subseteq \mathbb{Z}^{2d}$, and $| \mathbb{Z}^{2d} / \widetilde{\Gamma}^* | = | \widetilde{\Gamma} / \mathbb{Z}^{2d} | = | \widetilde{\Gamma} \cap [0,1)^{2d} |$.

\begin{proposition}[\cite{ACHKM10}, \cite{ACP11}]
\label{prop_SIS_gen_inv}
Let $\varphi \in L^2(\mathbb{R}^{2d})$ and let $\widetilde{\Gamma} \subset \mathbb{R}^{2d}$ be a lattice satisfying $\mathbb{Z}^{2d} \subseteq \widetilde{\Gamma} \subseteq \frac{1}{m} \mathbb{Z}^d \times \frac{1}{n} \mathbb{Z}^d$ where $m, n \in \mathbb{N}$ (so that $m \mathbb{Z}^d \times n \mathbb{Z}^d \subseteq \widetilde{\Gamma}^* \subseteq \mathbb{Z}^{2d}$).
We write $\mathbb{Z}^{2d} / \widetilde{\Gamma}^* = \{ I_0 = \widetilde{\Gamma}^* , I_1, \ldots, I_{N-1} \}$, where $N = | \mathbb{Z}^{2d} / \widetilde{\Gamma}^* |$ and the cosets $I_0, I_1, \ldots, I_{N-1}$ form a partition of $\mathbb{Z}^{2d}$.
For $\ell = 0, 1, \ldots , N-1$, let
\begin{align*}
B_{\ell} &= \bigcup_{(r,s) \in I_{\ell}} \, (r,s) + [0,1)^{2d}, \\
U_{\ell} &= \{f \in L^2(\mathbb{R}^d) : \widehat f = \widehat g \cdot \chi_{B_{\ell}} \; \text{ for some } g \in \mathcal{S} (\varphi, \mathbb{Z}^{2d}) \}.
\end{align*}
The following are equivalent.
\begin{itemize}
\item[(a)] $\mathcal{S} (\varphi, \mathbb{Z}^{2d})$ is $\widetilde{\Gamma}$-invariant, that is, $\mathcal{S} (\varphi, \mathbb{Z}^{2d}) = \mathcal{S} (\varphi, \widetilde{\Gamma})$.

\item[(b)]
$U_{\ell} \subseteq \mathcal{S} (\varphi, \mathbb{Z}^{2d})$ for all $\ell = 0, 1, \ldots , N-1$.

\item[(c)]
$\mathcal{F}^{-1}(\widehat \varphi \cdot \chi_{B_{\ell}} ) \subseteq \mathcal{S} (\varphi, \mathbb{Z}^{2d})$ for all $\ell = 0, 1, \ldots , N-1$.

\item[(d)]
For a.e.~$(\xi,\omega)$, $\widehat \varphi (\xi,\omega) \neq 0$ implies that $\widehat \varphi (\xi +r,\omega +s) = 0$ for all $(r,s) \in ( \mathbb{Z}^d \times \mathbb{Z}^d ) \backslash \widetilde{\Gamma}^*$.
Equivalently, for a.e.~$(\xi,\omega)$, at most one of the sums $\sum_{(r,s) \in I_{\ell}} | \widehat{\varphi} (\xi +r,\omega +s) |^2$, $\ell = 0, 1, \ldots , N-1$ is nonzero.
\end{itemize}
Moreover, if any one of the above holds, $\mathcal{S} (\varphi, \mathbb{Z}^{2d})$ is the orthogonal direct sum
$$\mathcal{S} (\varphi, \mathbb{Z}^{2d}) = U_{0} \oplus \cdots \oplus U_{N-1}$$
with each $U_{\ell}$ being a (possibly trivial) subspace of $\mathcal{S} (\varphi, \mathbb{Z}^{2d})$ which is invariant under translations by $\widetilde{\Gamma}$.
\end{proposition}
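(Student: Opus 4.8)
The plan is to transfer everything to the Fourier side via Lemma \ref{lem_thm_4_3_in_ACP11} and then exploit the coset structure of $\mathbb{Z}^{2d}/\widetilde\Gamma^*$. Since $(\mathbb{Z}^{2d})^* = \mathbb{Z}^{2d}$, the lemma identifies $\mathcal{S}(\varphi,\mathbb{Z}^{2d}) = \{f : \widehat f = m\,\widehat\varphi \text{ for some } \mathbb{Z}^{2d}\text{-periodic } m\}$ and $\mathcal{S}(\varphi,\widetilde\Gamma) = \{f : \widehat f = m\,\widehat\varphi \text{ for some } \widetilde\Gamma^*\text{-periodic } m\}$. Writing $h_\ell := \mathcal{F}^{-1}(\widehat\varphi\,\chi_{B_\ell})$, I would first record the identification $U_\ell = \{f : \widehat f = m\,\widehat\varphi\,\chi_{B_\ell} \text{ for some } \mathbb{Z}^{2d}\text{-periodic } m\} = \mathcal{S}(h_\ell,\mathbb{Z}^{2d})$. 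Because $\widetilde\Gamma^* \subseteq \mathbb{Z}^{2d}$, every $\mathbb{Z}^{2d}$-periodic multiplier is also $\widetilde\Gamma^*$-periodic, so $\mathcal{S}(\varphi,\mathbb{Z}^{2d}) \subseteq \mathcal{S}(\varphi,\widetilde\Gamma)$ holds unconditionally.

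The structural heart of the argument is the \emph{unconditional} orthogonal decomposition $\mathcal{S}(\varphi,\widetilde\Gamma) = U_0 \oplus \cdots \oplus U_{N-1}$. The sets $B_0,\ldots,B_{N-1}$ partition $\mathbb{R}^{2d}$ up to a null set, and each $B_\ell$ is $\widetilde\Gamma^*$-periodic (as $I_\ell$ is a single $\widetilde\Gamma^*$-coset); hence orthogonality and the inclusion $\bigoplus_\ell U_\ell \subseteq \mathcal{S}(\varphi,\widetilde\Gamma)$ follow at once from unitarity of $\mathcal{F}$ and from the fact that $m\,\chi_{B_\ell}$ is $\widetilde\Gamma^*$-periodic whenever $m$ is. The reverse inclusion is the delicate point: given $f\in\mathcal{S}(\varphi,\widetilde\Gamma)$ with $\widehat f = m\,\widehat\varphi$ and $m$ being $\widetilde\Gamma^*$-periodic, I would decompose $\widehat f = \sum_\ell m\,\widehat\varphi\,\chi_{B_\ell}$ and show each summand lies in $\widehat{U_\ell}$. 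This requires converting the $\widetilde\Gamma^*$-periodic multiplier into a $\mathbb{Z}^{2d}$-periodic one on $B_\ell$: fixing a representative $(r_\ell,s_\ell)\in I_\ell$, the values of $m$ on all cells of $B_\ell$ are determined by its values on $(r_\ell,s_\ell)+[0,1)^{2d}$, so setting $m'(\xi) := m(\xi+(r_\ell,s_\ell))$ on $[0,1)^{2d}$ and extending $\mathbb{Z}^{2d}$-periodically yields $m\,\chi_{B_\ell} = m'\,\chi_{B_\ell}$ with $m'$ being $\mathbb{Z}^{2d}$-periodic.

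With the decomposition in hand the equivalences fall out quickly. Since $\mathcal{S}(\varphi,\mathbb{Z}^{2d}) \subseteq \mathcal{S}(\varphi,\widetilde\Gamma) = \bigoplus_\ell U_\ell$, condition (a) (equality of the two spaces) is equivalent to $U_\ell \subseteq \mathcal{S}(\varphi,\mathbb{Z}^{2d})$ for all $\ell$, which is (b). The equivalence (b)$\Leftrightarrow$(c) is the identity $U_\ell = \mathcal{S}(h_\ell,\mathbb{Z}^{2d})$ together with shift-invariance and closedness of $\mathcal{S}(\varphi,\mathbb{Z}^{2d})$: the inclusion $U_\ell \subseteq \mathcal{S}(\varphi,\mathbb{Z}^{2d})$ holds iff its generator $h_\ell$ lies in $\mathcal{S}(\varphi,\mathbb{Z}^{2d})$. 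For (c)$\Leftrightarrow$(d) I would argue along each orbit $\{(\xi,\omega)+(r,s):(r,s)\in\mathbb{Z}^{2d}\}$: the relation $\widehat\varphi\,\chi_{B_\ell} = m_\ell\,\widehat\varphi$ with $m_\ell$ being $\mathbb{Z}^{2d}$-periodic forces $m_\ell$ to be constant along the orbit, equal to $1$ on the support of $\widehat\varphi$ inside $I_\ell$ and to $0$ on its support inside any other coset. Such an $m_\ell$ can be chosen (measurably, using the "at most one nonzero sum" formulation) for every $\ell$ precisely when the support of $\widehat\varphi$ along each orbit meets at most one coset $I_\ell$, which is exactly (d).

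Finally, for the "Moreover" part, under (a) the first-paragraph decomposition gives $\mathcal{S}(\varphi,\mathbb{Z}^{2d}) = \mathcal{S}(\varphi,\widetilde\Gamma) = U_0\oplus\cdots\oplus U_{N-1}$. To see that each $U_\ell$ is $\widetilde\Gamma$-invariant, note that translation by $\gamma\in\widetilde\Gamma$ becomes, on the Fourier side, multiplication by the character $\zeta\mapsto e^{-2\pi i\gamma\cdot\zeta}$; on a cell $(r,s)+[0,1)^{2d}$ of $B_\ell$ one has $e^{-2\pi i\gamma\cdot(r,s)} = e^{-2\pi i\gamma\cdot(r_\ell,s_\ell)}$, because $(r,s)-(r_\ell,s_\ell)\in\widetilde\Gamma^*$ and $\gamma\in\widetilde\Gamma = (\widetilde\Gamma^*)^*$ annihilates $\widetilde\Gamma^*$. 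Hence this character agrees on $B_\ell$ with a $\mathbb{Z}^{2d}$-periodic multiplier and preserves $\widehat{U_\ell}$, giving the $\widetilde\Gamma$-invariance. The main obstacle throughout is the bookkeeping in the second paragraph, namely correctly matching $\widetilde\Gamma^*$-periodic and $\mathbb{Z}^{2d}$-periodic multipliers on the periodic sets $B_\ell$, since all four characterizations ultimately hinge on this passage between the two notions of periodicity.
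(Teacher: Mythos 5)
Your overall route---the Fourier-side multiplier description via Lemma \ref{lem_thm_4_3_in_ACP11}, the coset decomposition into the $B_\ell$, and the orbit analysis for (c)$\Leftrightarrow$(d)---is the same mechanism this paper uses for the Gabor analogue (Theorem \ref{thm_Gabor_integer_lattice} together with Lemma \ref{lem_U_ell_inclusion}); note the paper itself does not reprove this proposition but imports it from \cite{ACHKM10} and \cite{ACP11}. However, the step you call the structural heart contains a genuine error: with the proposition's definition of $U_\ell$, the identity $U_\ell=\mathcal{S}(h_\ell,\mathbb{Z}^{2d})$ and the \emph{unconditional} decomposition $\mathcal{S}(\varphi,\widetilde\Gamma)=U_0\oplus\cdots\oplus U_{N-1}$ are false. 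Membership in $U_\ell$ requires an honest $g\in\mathcal{S}(\varphi,\mathbb{Z}^{2d})$, i.e.\ $m'\widehat\varphi\in L^2(\mathbb{R}^{2d})$ \emph{globally}, whereas your $\mathbb{Z}^{2d}$-periodic extension $m'$ only controls $m'\widehat\varphi$ on $B_\ell$. Concretely, take $d=1$, $\widetilde\Gamma=\tfrac12\mathbb{Z}\times\mathbb{Z}$, so $\widetilde\Gamma^*=2\mathbb{Z}\times\mathbb{Z}$ and $N=2$; let $\widehat\varphi=1$ on $[0,1)^2$, $\widehat\varphi(\cdot+(1,0))=b$ on $[0,1)^2$ with $b\in L^2([0,1)^2)$, and $\widehat\varphi=0$ elsewhere. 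Choosing a $\mathbb{Z}^2$-periodic $m$ with $m|_{[0,1)^2}\in L^2$ but $mb\notin L^2$ (e.g.\ $m=b=$ an $L^2$ function whose square is not integrable), the function $f$ with $\widehat f=m\,\chi_{[0,1)^2}$ lies in $\mathcal{S}(\varphi,\widetilde\Gamma)$ and in $\mathcal{S}(h_0,\mathbb{Z}^2)$, yet $f\notin U_0$: any $\mathbb{Z}^2$-periodic $\widetilde m$ with $\widetilde m\,\widehat\varphi\,\chi_{B_0}=\widehat f$ must agree with $m$ on $[0,1)^2$, forcing $\widetilde m\,\widehat\varphi\notin L^2$ on the neighbouring cell. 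What is true unconditionally is only $\overline{U_\ell}=\mathcal{S}(h_\ell,\mathbb{Z}^{2d})$ and $\mathcal{S}(\varphi,\mathbb{Z}^{2d})\subseteq\overline{U_0+\cdots+U_{N-1}}\subseteq\mathcal{S}(\varphi,\widetilde\Gamma)$.

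The damage is localized. The implications (a)$\Rightarrow$(b), (b)$\Leftrightarrow$(c) and (c)$\Leftrightarrow$(d) survive, because they use only the true inclusion $U_\ell\subseteq\mathcal{S}(h_\ell,\mathbb{Z}^{2d})$ and the trivial fact $h_\ell\in U_\ell$; but your (b)$\Rightarrow$(a) and the ``Moreover'' clause rest on the false decomposition. Two standard repairs: (i) run your second paragraph with $\overline{U_\ell}=\mathcal{S}(h_\ell,\mathbb{Z}^{2d})$ in place of $U_\ell$---your multiplier conversion then correctly yields $\mathcal{S}(\varphi,\widetilde\Gamma)\subseteq\overline{U_0+\cdots+U_{N-1}}$, and under (b) the right-hand side sits inside the closed space $\mathcal{S}(\varphi,\mathbb{Z}^{2d})$, giving (a); or (ii) follow the route of Theorem \ref{thm_Gabor_integer_lattice}: decompose only $\mathcal{S}(\varphi,\mathbb{Z}^{2d})$ itself, where $g=f$ works and each piece genuinely lies in $U_\ell$, after first proving (as in Lemma \ref{lem_U_ell_inclusion}) that the hypothesis $U_\ell\subseteq\mathcal{S}(\varphi,\mathbb{Z}^{2d})$ forces $U_\ell$ to be closed and $\widetilde\Gamma$-invariant; your character computation $e^{-2\pi i\gamma\cdot(r,s)}=e^{-2\pi i\gamma\cdot(r_\ell,s_\ell)}$ for $(r,s)\in I_\ell$, $\gamma\in\widetilde\Gamma$, is exactly the ingredient needed there. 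Either repair also restores the ``Moreover'' statement, since under the equivalent conditions each $U_\ell$ is closed and the decomposition of $\mathcal{S}(\varphi,\mathbb{Z}^{2d})$ with $g=f$ is immediate.
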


From the fact that $\mathcal{S} (\varphi, \mathbb{Z}^{2d})$ is translation invariant if and only if it is $\tfrac{1}{m} \mathbb{Z}^d \times \tfrac{1}{n} \mathbb{Z}^d$-invariant for all $m,n \in \mathbb{N}$, we obtain the following.

\begin{proposition}
\label{prop_SIS_trans_inv}
Let $\varphi \in L^2(\mathbb{R}^{2d})$. Then $\mathcal{S} (\varphi, \mathbb{Z}^{2d})$ is invariant under all translations if and only if $\widehat{\varphi} (\xi,\omega)$ vanishes a.e.~outside a fundamental domain of the lattice $\mathbb{Z}^{2d}$.
\end{proposition}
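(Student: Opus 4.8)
The plan is to derive Proposition~\ref{prop_SIS_trans_inv} as a limiting case of Proposition~\ref{prop_SIS_gen_inv} by combining the stated equivalence with the key observation that full translation invariance is equivalent to invariance under the family of all lattices $\tfrac1m\mathbb{Z}^d \times \tfrac1n\mathbb{Z}^d$, $m,n \in \mathbb{N}$. So first I would fix the logical skeleton: $\mathcal{S}(\varphi,\mathbb{Z}^{2d})$ is invariant under all translations if and only if it is $\widetilde{\Gamma}$-invariant for every lattice $\widetilde{\Gamma}$ of the form $\tfrac1m\mathbb{Z}^d \times \tfrac1n\mathbb{Z}^d$. The forward direction is trivial since each such lattice sits inside $\mathbb{R}^{2d}$; the reverse direction uses that the union $\bigcup_{m,n} \tfrac1m\mathbb{Z}^d \times \tfrac1n\mathbb{Z}^d$ is dense in $\mathbb{R}^{2d}$ together with the fact (cited via Proposition~\ref{prop_M_is_closed}) that the time invariance set $\mathcal{T}(V)$ is a \emph{closed} subgroup, so a dense invariance set forces full invariance.

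Next I would translate the condition. Applying Proposition~\ref{prop_SIS_gen_inv}(d) to the separable lattice $\widetilde{\Gamma}^* = m\mathbb{Z}^d \times n\mathbb{Z}^d$, invariance under $\tfrac1m\mathbb{Z}^d \times \tfrac1n\mathbb{Z}^d$ says: for a.e.\ $(\xi,\omega)$, whenever $\widehat\varphi(\xi,\omega)\neq 0$ then $\widehat\varphi(\xi+r,\omega+s)=0$ for every $(r,s) \in (\mathbb{Z}^d\times\mathbb{Z}^d)\setminus(m\mathbb{Z}^d \times n\mathbb{Z}^d)$. Requiring this simultaneously for all $m,n \in \mathbb{N}$ means that $\widehat\varphi(\xi,\omega)\neq 0$ forces $\widehat\varphi(\xi+r,\omega+s)=0$ for every nonzero integer vector $(r,s)$, because any such $(r,s)$ lies outside $m\mathbb{Z}^d\times n\mathbb{Z}^d$ for suitably large $m,n$. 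In other words, on each coset $(\xi,\omega)+\mathbb{Z}^{2d}$, the function $\widehat\varphi$ is supported on at most one representative. This is precisely the statement that $\widehat\varphi$ vanishes a.e.\ outside a (measurable) set meeting each $\mathbb{Z}^{2d}$-coset in at most one point, i.e.\ a fundamental domain of $\mathbb{Z}^{2d}$.

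**Then I would verify the equivalence with the fundamental-domain formulation in both directions.** If $\widehat\varphi$ is supported in a set $D$ that meets each coset in at most a single point a.e., the one-representative support condition holds for every $m,n$, hence $\mathcal{S}(\varphi,\mathbb{Z}^{2d})$ is $\tfrac1m\mathbb{Z}^d \times \tfrac1n\mathbb{Z}^d$-invariant for all $m,n$ and therefore fully translation-invariant. Conversely, full translation invariance gives the one-representative condition for all $m,n$, and I would assemble a fundamental domain by choosing, measurably, the single representative on each coset where $\widehat\varphi$ is nonzero; a clean way is to set $D = \{(\xi,\omega) : \widehat\varphi(\xi,\omega)\neq 0\}$ and observe that the condition forces $D$ to intersect each $\mathbb{Z}^{2d}$-coset in at most one point a.e., so $D$ is contained in a fundamental domain up to a null set.

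**The main obstacle** will be the reverse step of the first reduction, namely justifying that invariance under the dense family $\bigcup_{m,n}\tfrac1m\mathbb{Z}^d\times\tfrac1n\mathbb{Z}^d$ upgrades to invariance under all of $\mathbb{R}^{2d}$. This is exactly where the closedness of $\mathcal{T}(\mathcal{S}(\varphi,\mathbb{Z}^{2d}))$ (the analogue of the cited Proposition~2.1 in \cite{ACP11}, here realized through Proposition~\ref{prop_M_is_closed}) is essential: a subgroup of $\mathbb{R}^{2d}$ that is both closed and dense must be all of $\mathbb{R}^{2d}$. The measure-theoretic bookkeeping in the support argument is routine once one fixes representatives cosetwise, so I would not expect difficulty there; the only care needed is the ``for a.e.'' quantifier interacting with the countable family indexed by $(m,n)$, which is harmless since a countable union of null sets is null.
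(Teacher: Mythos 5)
Your proposal is correct and follows essentially the same route the paper indicates: the paper derives Proposition \ref{prop_SIS_trans_inv} precisely by noting that full translation invariance is equivalent to $\tfrac{1}{m}\mathbb{Z}^d \times \tfrac{1}{n}\mathbb{Z}^d$-invariance for all $m,n\in\mathbb{N}$ and then applying Proposition \ref{prop_SIS_gen_inv}(d) to each such lattice, which is exactly your argument (with the density-plus-closedness step and the countable intersection over $(m,n)$ spelled out). The details you supply, including the use of the closedness of the invariance set and the identification of $\{\widehat\varphi \neq 0\}$ with a subset of a fundamental domain, are all sound.
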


\begin{remark}
\label{rmk_SIS_extra_inv}
\rm
Proposition \ref{prop_SIS_gen_inv} hinges on the representations associated with $\mathcal{S} (\varphi, \mathbb{Z}^{2d})$ and $\mathcal{S} (\varphi, \widetilde{\Gamma})$.
If $\mathcal{S} (\varphi, \mathbb{Z}^{2d}) = \mathcal{S} (\varphi, \widetilde{\Gamma})$ where $\varphi \in L^2(\mathbb{R}^{2d})$ and $\widetilde{\Gamma} \supsetneq \mathbb{Z}^{2d}$, then every function $f$ in $\mathcal{S} (\varphi, \mathbb{Z}^{2d})$ can be expressed in two different ways (in the Fourier transform domain):
\begin{align*}
m(\xi,\omega) \, \widehat{\varphi}(\xi,\omega) = \widehat{f}(\xi,\omega) = \widetilde{m}(\xi,\omega) \, \widehat{\varphi}(\xi,\omega)
\quad \text{a.e.} ,
\end{align*}
where $m(\xi,\omega)$ is $\mathbb{Z}^{2d}$-periodic and $\widetilde{m}(\xi,\omega)$ is $\widetilde{\Gamma}^*$-periodic, and thus we have
\begin{align*}
m(\xi,\omega) = \widetilde{m}(\xi,\omega)
\quad \text{for a.e.} \; (\xi,\omega) \;\; \text{such that} \; \widehat{\varphi} (\xi,\omega) \neq 0.
\end{align*}
Picking $\widetilde{m}(\xi,\omega)$ a genuinely $\widetilde{\Gamma}^*$-periodic function (e.g., $\widetilde{m}(\xi,\omega) = e^{-2 \pi i (x \cdot \frac{a}{m} + \omega \cdot \frac{b}{n})}$ if $f = T_{(\frac{a}{m},\frac{b}{n})} \varphi$ and $(\frac{a}{m},\frac{b}{n}) \in \widetilde{\Gamma}$ for some $a,b \in \mathbb{Z}$) and exploiting the fact that $\widetilde{\Gamma}^* \subsetneq \mathbb{Z}^{2d}$, we get some restrictions on set $\{ (\xi,\omega) \, : \, \widehat{\varphi} (\xi,\omega) \neq 0 \}$ which is defined up to a measure zero set.
Clearly, it is impossible that $\widehat{\varphi} (\xi,\omega) \neq 0$ a.e.
This yields the condition (d) in Proposition \ref{prop_SIS_gen_inv}.
\end{remark}

\medskip

\section{Gabor spaces}
\label{sec:Gabor_sp}

When considering time-frequency shift invariant spaces, i.e., Gabor spaces, the Zak transform replaces the Fourier transform and adjoint lattice takes over the role of dual lattice (compare Lemma \ref{lem_thm_4_3_in_ACP11} with Lemma \ref{lem_Gabor_ftn_exp_lambda_integer}).

\subsection{Zak transform representation for Gabor spaces.}
\label{subsec:Gabor_representation}
$ $

Recall that Lemma \ref{lem_thm_4_3_in_ACP11} gives Fourier transform representation for shift-invariant spaces. In this section, we treat analogous representations for Gabor spaces using Zak transform.

For a (full rank) lattice $\Lambda \subset \mathbb{R}^{2d}$, its \emph{adjoint lattice} is defined by
$$
{\Lambda}^{\circ} = \{ (x,\omega) \in \mathbb{R}^{2d} \, : \, \pi (u,\eta) \circ \pi (x,\omega) = \pi (x,\omega) \circ \pi (u,\eta)
\quad \text{for all} \; (u,\eta) \in \Lambda \} .
$$
Using the relation (\ref{eqn_commut_rel}), we immediately see that
$$
{\Lambda}^{\circ}
= \{ (x,\omega) \in \mathbb{R}^{2d} \, : \, e^{2 \pi i (\eta \cdot x - u \cdot \omega)} = 1
\quad \text{for all} \; (u,\eta) \in \Lambda \} .
$$
If $\Lambda = A \, \mathbb{Z}^{2d}$ where $A \in GL(2d,\mathbb{R})$, then
\begin{align}
\label{eqn_adjoint_lattice_gen_matrix_form}
{\Lambda}^{\circ} = \left( {0 \atop -I_d} \; {I_d \atop 0} \right) (A^{-1})^T \, \mathbb{Z}^{2d} .
\end{align}
If $\Lambda$ is a separable lattice of the form $\Lambda = A \, \mathbb{Z}^d \times B \, \mathbb{Z}^d$ where $A, B \in GL(d,\mathbb{R})$, then ${\Lambda}^{\circ} = (B^{-1})^T \, \mathbb{Z}^d \times (A^{-1})^T \, \mathbb{Z}^d$ (cf.~\cite[p.154]{FZ98}).
In particular, $\left( \alpha \mathbb{Z}^d \times \beta \mathbb{Z}^d \right)^{\circ} = \tfrac{1}{\beta} \mathbb{Z}^d \times \tfrac{1}{\alpha} \mathbb{Z}^d$ where $\alpha, \beta > 0$.
It is easily seen that $({\Lambda}^{\circ})^{\circ} = \Lambda$ for any lattice $\Lambda \subset \mathbb{R}^{2d}$, and that the adjoint reverses the inclusions: ${(\Lambda')}^{\circ} \subset {\Lambda}^{\circ}$ if $\Lambda \subset \Lambda'$.

When $\Lambda \subseteq \mathbb{Z}^{2d}$, we have ${\Lambda}^{\circ} \supseteq (\mathbb{Z}^{2d})^{\circ} = \mathbb{Z}^{2d}$ and in this case the functions in $\mathcal{G}(\varphi, \Lambda)$ are accessible through a simple expression using the Zak transform.

\begin{lemma}
\label{lem_Gabor_ftn_exp_lambda_integer}
Let $\varphi \in L^2(\mathbb{R}^d)$ and let $\Lambda \subseteq \mathbb{Z}^{2d}$ be a lattice.
Then $f \in L^2(\mathbb{R}^d)$ belongs to $\mathcal{G}(\varphi, \Lambda)$ if and only if
there exists a ${\Lambda}^{\circ}$-periodic measurable function $h ( x , \omega )$ such that
\begin{align}
\label{eqn_Zf_equal_h_Zvarphi}
Z f ( x , \omega ) = h ( x , \omega ) \,  Z \varphi ( x , \omega ) .
\end{align}
\end{lemma}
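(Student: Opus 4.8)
The plan is to mimic the structure of the Fourier-transform characterization (Lemma~\ref{lem_thm_4_3_in_ACP11}) but in the Zak domain, exploiting that the Zak transform turns each integer time-frequency shift into multiplication by a unimodular exponential. First I would establish the easy direction. Using the key identity~\eqref{Zak_on_pi_quasi_peri}, for any $(u,\eta) \in \Lambda \subseteq \mathbb{Z}^{2d}$ we have $Z[\pi(u,\eta)\varphi](x,\omega) = e^{2\pi i(\eta\cdot x - u\cdot\omega)}\, Z\varphi(x,\omega)$. The exponential $e_{(u,\eta)}(x,\omega) := e^{2\pi i(\eta\cdot x - u\cdot\omega)}$ is exactly ${\Lambda}^{\circ}$-periodic, since ${\Lambda}^{\circ} = \{(x,\omega) : e^{2\pi i(\eta\cdot x - u\cdot\omega)} = 1 \text{ for all }(u,\eta)\in\Lambda\}$. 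Hence every generator $\pi(u,\eta)\varphi$ has Zak transform of the claimed form $h\cdot Z\varphi$ with $h$ a ${\Lambda}^{\circ}$-periodic trigonometric character. Finite linear combinations then have Zak transform $h\cdot Z\varphi$ with $h$ a ${\Lambda}^{\circ}$-periodic trigonometric polynomial, and I would pass to the closure to cover all of $\mathcal{G}(\varphi,\Lambda)$.

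For the harder direction I need to show that every $f$ with $Zf = h\cdot Z\varphi$, $h$ ${\Lambda}^{\circ}$-periodic and measurable, actually lies in $\mathcal{G}(\varphi,\Lambda)$. The natural approach is to view $\mathcal{G}(\varphi,\Lambda)$, under the unitary map $Z$, as a closed subspace of $L^2([0,1)^{2d})$ and identify it precisely. Let $V = Z\,\mathcal{G}(\varphi,\Lambda) \subseteq L^2([0,1)^{2d})$. By the easy direction, $V$ contains $\{e_{(u,\eta)} Z\varphi : (u,\eta)\in\Lambda\}$ and is closed. I would argue that $V$ coincides with the closed subspace $W := \{h\cdot Z\varphi : h \text{ is } {\Lambda}^{\circ}\text{-periodic}, \; h\cdot Z\varphi \in L^2\}$. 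The inclusion $V\subseteq W$ follows from the easy direction by closedness (any $L^2$-limit of functions of the form $h_n Z\varphi$ with ${\Lambda}^{\circ}$-periodic $h_n$ is again of the form $h\cdot Z\varphi$ for some ${\Lambda}^{\circ}$-periodic $h$, after passing to a subsequence converging a.e.\ on the set where $Z\varphi\neq 0$). The reverse inclusion $W\subseteq V$ is where the real content lies: I must approximate an \emph{arbitrary} measurable ${\Lambda}^{\circ}$-periodic multiplier $h$ by ${\Lambda}^{\circ}$-periodic trigonometric polynomials in the weighted norm $\|\cdot\, Z\varphi\|_{L^2}$.

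The main obstacle is precisely this density/approximation step, and the point to handle carefully is the vanishing set of $Z\varphi$. The characters $e_{(u,\eta)}$, $(u,\eta)\in\Lambda$, are exactly the characters of the compact quotient group $[0,1)^{2d}/({\Lambda}^{\circ}\cap[0,1)^{2d})$ — equivalently, ${\Lambda}^{\circ}$-periodic functions are naturally functions on a fundamental domain of ${\Lambda}^{\circ}$, and these characters span a dense subspace of the $L^2$ of that fundamental domain (Stone--Weierstrass / Fourier series on the quotient torus). The strategy is therefore: given the target $h\cdot Z\varphi \in L^2$, I would reduce to approximating $h$ in $L^2(\mu)$ where $d\mu = |Z\varphi|^2\,dx\,d\omega$ is pushed down to the fundamental domain of ${\Lambda}^{\circ}$ (here I use that $|Z\varphi(x,\omega)|^2$ is itself ${\Lambda}^{\circ}$-periodic up to the quasi-periodicity, so the weight descends consistently to the quotient), and then invoke density of trigonometric polynomials in $L^2(\mu)$ on the compact quotient group. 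The subtlety is that where $Z\varphi = 0$ the multiplier $h$ is unconstrained, so I only need approximation with respect to the weight $\mu$, which is exactly what the weighted $L^2$ convergence delivers; trigonometric polynomials remain dense in $L^2(\mu)$ for any finite Borel measure $\mu$ on the quotient torus. Assembling these pieces gives $W = V$, hence $f\in\mathcal{G}(\varphi,\Lambda)$, completing the characterization.
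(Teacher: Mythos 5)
Your argument is correct, but it takes a genuinely different route from the paper. The paper's proof (Appendix II, following de Boor--DeVore--Ron) introduces the bracket $[f,g]_{\Lambda^{\circ}}(x,\omega)=\sum_{(u,\eta)\in\Lambda^{\circ}\cap[0,1)^{2d}}Zf(x+u,\omega+\eta)\,\overline{Zg(x+u,\omega+\eta)}$ and shows that the orthogonal projection $P$ onto $\mathcal{G}(\varphi,\Lambda)$ acts in the Zak domain as $Z(Pf)=h_f\,Z\varphi$ with $h_f=[f,\varphi]_{\Lambda^{\circ}}/[\varphi,\varphi]_{\Lambda^{\circ}}$ on $\supp [\varphi,\varphi]_{\Lambda^{\circ}}$; both directions of the lemma then follow from the single equivalence $Pf=f \Leftrightarrow Zf=h_f\,Z\varphi$. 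You instead run a soft density argument: the characters $e_{(u,\eta)}$ with $(u,\eta)\in\Lambda=(\Lambda^{\circ})^{\circ}$ separate points of the compact quotient $\mathbb{R}^{2d}/\Lambda^{\circ}$, so by Stone--Weierstrass their span is dense in $L^{2}(\mu)$ for the finite pushforward $\mu$ of $|Z\varphi|^{2}\,dx\,d\omega$, and the identity $\|(h-p)Z\varphi\|_{L^{2}([0,1)^{2d})}=\|h-p\|_{L^{2}(\mu)}$ (valid because $|h-p|^{2}$ is $\Lambda^{\circ}$-periodic) closes the loop. Your route avoids constructing the projection; the paper's route buys the explicit canonical multiplier $h_f$, which is the standard range-function machinery and is implicitly reused later (e.g., in the periodization identities (\ref{eqn_peri_trick1})--(\ref{eqn_peri_trick2}) in the proof of Theorem \ref{thm_Gabor_integer_lattice}). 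Two details in your write-up deserve tightening, though neither is a gap. First, $|Z\varphi|^{2}$ is only $\mathbb{Z}^{2d}$-periodic, not $\Lambda^{\circ}$-periodic, so it does not ``descend'' to the quotient as a density; what you actually use is the pushforward measure, whose density on a fundamental domain of $\Lambda^{\circ}$ is precisely the periodization $[\varphi,\varphi]_{\Lambda^{\circ}}$ --- the paper's bracket reappearing in disguise. Second, in the inclusion $V\subseteq W$, the a.e.\ limit of the $h_{n}$ on $\{Z\varphi\neq 0\}$ need not be $\Lambda^{\circ}$-periodic as stated; you should define the limit multiplier orbit-wise: for a.e.\ point of a fundamental domain the scalars $h_{n}(x,\omega)$ form a single sequence, constant on the finite orbit $\{(x+u,\omega+\eta):(u,\eta)\in\Lambda^{\circ}\cap[0,1)^{2d}\}$, and this sequence converges whenever some orbit point lies in $\{Z\varphi\neq 0\}$, which pins down a genuinely $\Lambda^{\circ}$-periodic $h$ with $Zf=h\,Z\varphi$.
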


%%%%% A detailed proof of Lemma \ref{lem_Gabor_ftn_exp_lambda_integer} is given in Appendix II. Below we describe the main mechanics of Lemma \ref{lem_Gabor_ftn_exp_lambda_integer}.
A proof of Lemma \ref{lem_Gabor_ftn_exp_lambda_integer} is given in Appendix II.
Below we describe the main mechanics of the proof, to help the reader understand the following results.
Assume that $(\varphi, \Lambda)$ is a frame for its closed linear span $\mathcal{G}(\varphi, \Lambda)$, so that every $f \in \mathcal{G}(\varphi, \Lambda)$ can be expressed in the form
\begin{align*}
f = \sum_{(u,\eta) \in \Lambda}  c_{u,\eta} \, \pi (u,\eta) \, \varphi ,
\quad  \{ c_{u,\eta} \}_{(u,\eta) \in \Lambda} \in \ell^2 (\Lambda) .
\end{align*}
Applying the Zak transform on both sides and using (\ref{Zak_on_pi_quasi_peri}), we obtain the equation (\ref{eqn_Zf_equal_h_Zvarphi}) with $h ( x , \omega ) = \sum_{(u,\eta) \in \Lambda}  c_{u,\eta} \, e^{2 \pi i (\eta \cdot x - u \cdot \omega)} \in L^{2}_{loc} (\mathbb{R}^d \times \mathbb{R}^d)$.
Note that the requirement $\Lambda \subseteq \mathbb{Z}^{2d}$ enables the use of (\ref{Zak_on_pi_quasi_peri}), and that $h ( x , \omega )$ is ${\Lambda}^{\circ}$-periodic, since for any $(x_0,\omega_0) \in {\Lambda}^{\circ}$,
\begin{align*}
h ( x + x_0, \omega + \omega_0)
&= \sum_{(u,\eta) \in \Lambda}  c_{u,\eta} \, e^{2 \pi i (\eta \cdot x - u \cdot \omega)} \, e^{2 \pi i (\eta \cdot x_0 - u \cdot \omega_0)}
= \sum_{(u,\eta) \in \Lambda}  c_{u,\eta} \, e^{2 \pi i (\eta \cdot x - u \cdot \omega)} \cdot 1
=  h ( x , \omega ) .
\end{align*}

As can be seen above, the condition $\Lambda \subseteq \mathbb{Z}^{2d}$ plays a crucial role in Lemma \ref{lem_Gabor_ftn_exp_lambda_integer} and therefore cannot be dropped.
Conversely, assume that (\ref{eqn_Zf_equal_h_Zvarphi}) holds for some ${\Lambda}^{\circ}$-periodic measurable function $h ( x , \omega )$ where $\Lambda \subset \mathbb{R}^{2d}$ is a lattice. Then since $Z f ( x , \omega )$ and $Z \varphi ( x , \omega )$ are quasi-periodic, $h ( x , \omega )$ can be replaced with a function which is both ${\Lambda}^{\circ}$-periodic and $\mathbb{Z}^{2d}$-periodic.
That is, $h ( x , \omega )$ can be always assumed to be $\mathbb{Z}^{2d}$-periodic, which naturally suggests that ${\Lambda}^{\circ} \supseteq \mathbb{Z}^{2d}$, i.e., $\Lambda \subseteq \mathbb{Z}^{2d}$.
Hence, the requirement $\Lambda \subseteq \mathbb{Z}^{2d}$ in Lemma \ref{lem_Gabor_ftn_exp_lambda_integer} is not only essential but also very natural for (\ref{eqn_Zf_equal_h_Zvarphi}) to hold.

Note that since both sides of (\ref{eqn_Zf_equal_h_Zvarphi}) are quasi-periodic, it is sufficient to check the equality (\ref{eqn_Zf_equal_h_Zvarphi}) only for a.e.~$(x,\omega)$ in $[0,1)^{2d}$.

\medskip

\subsection{Extra time-frequency shift invariance of Gabor spaces.}
\label{subsec:Gabor_extra_inv}
$ $

Equipped with the representation for Gabor spaces, we are ready to analyze extra invariance of Gabor spaces $\mathcal{G}(\varphi, \Lambda)$ where $\varphi \in L^2(\mathbb{R}^d)$ and $\Lambda \subseteq \mathbb{Z}^{2d}$ is a lattice.

Let $\widetilde{\Lambda} \subseteq \mathbb{R}^{2d}$ be a closed subgroup which contains $\Lambda$ strictly, that is, $\Lambda \subsetneq \widetilde{\Lambda} \subseteq \mathbb{R}^{2d}$.
Then $\mathcal{G}(\varphi, \Lambda)$ is $\widetilde{\Lambda}$-invariant if and only if $\mathcal{G}(\varphi, \Lambda) = \mathcal{G}(\varphi, \widetilde{\Lambda})$, in which case every $f \in \mathcal{G}(\varphi, \Lambda)$ admits another representation as a function of $\mathcal{G}(\varphi, \widetilde{\Lambda})$.

\subsubsection{The case $\Lambda \subseteq \widetilde{\Lambda} \subseteq \mathbb{Z}^{2d}$.}
\label{subsubsec:extra_inv_subset_int}
$ $

As our first main result, we characterize the $\widetilde{\Lambda}$-invariance of $\mathcal{G}(\varphi, \Lambda)$ when $\Lambda, \widetilde{\Lambda} \subseteq \mathbb{R}^{2d}$ are lattices such that $\Lambda \subseteq \widetilde{\Lambda} \subseteq \mathbb{Z}^{2d}$.

\begin{theorem}
\label{thm_Gabor_integer_lattice}
Let $\varphi \in L^2(\mathbb{R}^d)$ and let $\Lambda, \widetilde{\Lambda} \subseteq \mathbb{R}^{2d}$ be lattices satisfying $\Lambda \subseteq \widetilde{\Lambda} \subseteq \mathbb{Z}^{2d}$ (so that ${\Lambda}^{\circ} \supseteq \widetilde{\Lambda}^{\circ} \supseteq \mathbb{Z}^{2d}$).
We write the quotient ${\Lambda}^{\circ} / \widetilde{\Lambda}^{\circ}$ as $\{ I^{(0)} = \widetilde{\Lambda}^{\circ} , I^{(1)}, \ldots, I^{(N-1)} \}$, where $N$ is the order of ${\Lambda}^{\circ} / \widetilde{\Lambda}^{\circ}$ and the cosets $I^{(0)}, I^{(1)}, \ldots, I^{(N-1)}$ all together forms a partition of ${\Lambda}^{\circ}$.
Let $D \subset [0,1)^{2d}$ be a fundamental domain of the lattice ${\Lambda}^{\circ}$.
For $\ell = 0, 1, \ldots , N-1$, let
\begin{align*}
B^{(\ell)} &= \bigcup_{(u,\eta) \in I^{(\ell)}} (u,\eta) + D , \\
U^{(\ell)} &= \{f \in L^2(\mathbb{R}^d) : Z f = Z g \cdot \chi_{B^{(\ell)}} \; \text{ for some } g \in \mathcal{G}(\varphi, \Lambda) \}.
\end{align*}
The following are equivalent.
\begin{itemize}
\item[(a)] $\mathcal{G}(\varphi, \Lambda)$ is $\widetilde{\Lambda}$-invariant, i.e., $\mathcal{G}(\varphi, \Lambda) = \mathcal{G}(\varphi, \widetilde{\Lambda})$.

\item[(b)]
$U^{(\ell)} \subseteq \mathcal{G}(\varphi, \Lambda)$ for all $\ell = 0, 1, \ldots , N-1$.

\item[(c)]
$Z^{-1} ( Z \varphi \cdot \chi_{B^{(\ell)}} ) \in \mathcal{G}(\varphi, \Lambda)$ for all $\ell = 0, 1, \ldots , N-1$.

\item[(d)]
For a.e.~$(x,\omega)$,
\begin{align}
\label{eqn_cond_d_of_thm_Gabor_integer_lattice}
Z \varphi (x , \omega ) \neq 0 \quad \text{implies}
\quad Z \varphi ( x + u , \omega + \eta ) = 0
\quad \text{for all} \;(u,\eta) \in {\Lambda}^{\circ} \backslash \widetilde{\Lambda}^{\circ} .
\end{align}
Equivalently, for a.e.~$(x,\omega)$, at most one of the sums $\sum_{(u,\eta) \in I^{(\ell)} \cap [0,1)^{2d}} | Z \varphi (x + u , \omega + \eta) |^2$, $\ell = 0, 1, \ldots , N-1$ is nonzero.
\end{itemize}
Moreover, if any one of the above holds, $\mathcal{G}(\varphi, \Lambda)$ is the orthogonal direct sum
$$\mathcal{G}(\varphi, \Lambda) = U^{(0)} \oplus \cdots \oplus U^{(N-1)}$$
with each $U^{(\ell)}$ being a (possibly trivial) subspace of $\mathcal{G}(\varphi, \Lambda)$ which is $\widetilde{\Lambda}$-invariant.
\end{theorem}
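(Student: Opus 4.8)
The plan is to use the Zak-transform representation of Lemma \ref{lem_Gabor_ftn_exp_lambda_integer} as the single engine driving all four equivalences. Since $\Lambda \subseteq \widetilde{\Lambda} \subseteq \mathbb{Z}^{2d}$, the lemma applies to \emph{both} lattices: $g \in \mathcal{G}(\varphi,\Lambda)$ iff $Zg = h\, Z\varphi$ for some ${\Lambda}^{\circ}$-periodic measurable $h$, and $g\in\mathcal{G}(\varphi,\widetilde{\Lambda})$ iff $Zg = \widetilde{h}\, Z\varphi$ for some $\widetilde{\Lambda}^{\circ}$-periodic $\widetilde{h}$. At the outset I would record the structural facts I will reuse: each $B^{(\ell)}$ is $\widetilde{\Lambda}^{\circ}$-periodic (because the coset $I^{(\ell)}$ is invariant under adding elements of $\widetilde{\Lambda}^{\circ}$), the sets $B^{(0)},\dots,B^{(N-1)}$ partition $\mathbb{R}^{2d}$ up to a null set, and any ${\Lambda}^{\circ}$-periodic function is automatically $\widetilde{\Lambda}^{\circ}$-periodic since $\widetilde{\Lambda}^{\circ}\subseteq{\Lambda}^{\circ}$.

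The first and most important step is an \emph{unconditional} decomposition $\mathcal{G}(\varphi,\widetilde{\Lambda}) = U^{(0)}\oplus\cdots\oplus U^{(N-1)}$. Orthogonality is immediate from unitarity of $Z$ together with the disjointness of the supports $B^{(\ell)}$. The inclusion $U^{(\ell)}\subseteq\mathcal{G}(\varphi,\widetilde{\Lambda})$ follows because for $f\in U^{(\ell)}$ one has $Zf = h\,\chi_{B^{(\ell)}}\,Z\varphi$ with $h$ being ${\Lambda}^{\circ}$-periodic, and $h\,\chi_{B^{(\ell)}}$ is $\widetilde{\Lambda}^{\circ}$-periodic. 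Conversely, given $g\in\mathcal{G}(\varphi,\widetilde{\Lambda})$ I split $Zg=\sum_\ell Zg\cdot\chi_{B^{(\ell)}}$ and show each summand lies in $U^{(\ell)}$; the one recurring lemma I will isolate and prove once is that a $\widetilde{\Lambda}^{\circ}$-periodic factor multiplied by $\chi_{B^{(\ell)}}$ may be replaced, without changing the product with $\chi_{B^{(\ell)}}$, by a ${\Lambda}^{\circ}$-periodic factor. This holds because the tiles composing $B^{(\ell)}$ are the ${\Lambda}^{\circ}$-translates of the fundamental domain $D$ indexed by the coset $I^{(\ell)}$, and a $\widetilde{\Lambda}^{\circ}$-periodic function is constant across the corresponding $\widetilde{\Lambda}^{\circ}$-shifts, so defining the replacement on $D$ and extending ${\Lambda}^{\circ}$-periodically reproduces the original on $B^{(\ell)}$. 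The same replacement lemma shows each $U^{(\ell)}$ is $\widetilde{\Lambda}$-invariant, using that for $(u_0,\eta_0)\in\widetilde{\Lambda}\subseteq\mathbb{Z}^{2d}$ the multiplier $e^{2\pi i(\eta_0\cdot x-u_0\cdot\omega)}$ from (\ref{Zak_on_pi_quasi_peri}) is $\widetilde{\Lambda}^{\circ}$-periodic.

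With the decomposition in hand the equivalences fall out quickly. For (a)$\Leftrightarrow$(b): since $\mathcal{G}(\varphi,\Lambda)\subseteq\mathcal{G}(\varphi,\widetilde{\Lambda})=\bigoplus_\ell U^{(\ell)}$ always, requiring $U^{(\ell)}\subseteq\mathcal{G}(\varphi,\Lambda)$ for all $\ell$ is equivalent to $\mathcal{G}(\varphi,\widetilde{\Lambda})\subseteq\mathcal{G}(\varphi,\Lambda)$, i.e.\ to equality. For (b)$\Rightarrow$(c) I note $Z^{-1}(Z\varphi\cdot\chi_{B^{(\ell)}})\in U^{(\ell)}$ because $\varphi\in\mathcal{G}(\varphi,\Lambda)$. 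For (c)$\Rightarrow$(b) the key point is that $Z^{-1}(Z\varphi\,\chi_{B^{(\ell)}})\in\mathcal{G}(\varphi,\Lambda)$ means $\chi_{B^{(\ell)}}\,Z\varphi = k_\ell\,Z\varphi$ for some ${\Lambda}^{\circ}$-periodic $k_\ell$; then for any $f\in U^{(\ell)}$ we get $Zf = h\,\chi_{B^{(\ell)}}\,Z\varphi = (h\,k_\ell)\,Z\varphi$ with $h\,k_\ell$ being ${\Lambda}^{\circ}$-periodic, so $f\in\mathcal{G}(\varphi,\Lambda)$.

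The heart of the argument is (c)$\Leftrightarrow$(d), which I phrase in terms of the support $S=\{Z\varphi\neq0\}$ (whose modulus is $\mathbb{Z}^{2d}$-periodic by quasi-periodicity of $Z$). Condition (d) says precisely that $S$ meets each ${\Lambda}^{\circ}$-orbit within a single $\widetilde{\Lambda}^{\circ}$-coset, i.e.\ within a single $B^{(\ell)}$. For (d)$\Rightarrow$(c) I use $D$ to select, for a.e.\ orbit meeting $S$, the unique index $\ell_0$ with $S\cap(\text{orbit})\subseteq B^{(\ell_0)}$, and set $k_\ell$ to be the ${\Lambda}^{\circ}$-periodic extension of $\mathbf{1}[\ell=\ell_0]$; then $k_\ell=\chi_{B^{(\ell)}}$ on $S$, which is (c). For (c)$\Rightarrow$(d) I argue by contradiction: a positive-measure failure of (d) gives two points $(x,\omega),(x+u,\omega+\eta)\in S$ in one ${\Lambda}^{\circ}$-orbit but in distinct $B^{(\ell)}$'s, contradicting the ${\Lambda}^{\circ}$-periodicity of $k_\ell$ together with $k_\ell=\chi_{B^{(\ell)}}$ on $S$. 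The second, summed form of (d) is then just a reformulation via $\mathbb{Z}^{2d}$-periodicity of $|Z\varphi|^2$ and the identification of $I^{(\ell)}\cap[0,1)^{2d}$ with coset representatives. The moreover statement is immediate: under (a), $\mathcal{G}(\varphi,\Lambda)=\mathcal{G}(\varphi,\widetilde{\Lambda})=\bigoplus_\ell U^{(\ell)}$ with the $U^{(\ell)}$ already shown $\widetilde{\Lambda}$-invariant. I expect the main obstacle to be the careful measurable bookkeeping behind the replacement lemma and the orbit-wise selection of $\ell_0$, namely verifying that the a.e.\ statements are preserved under these constructions; the underlying algebra is straightforward.
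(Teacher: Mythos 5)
Your proposal is correct, and while it runs on the same engine as the paper (Lemma \ref{lem_Gabor_ftn_exp_lambda_integer} applied to both $\Lambda$ and $\widetilde{\Lambda}$, plus the observation that $B^{(\ell)}$ is $\widetilde{\Lambda}^{\circ}$-periodic), it is organized along a genuinely different route. The paper proves the cycle (a)$\Rightarrow$(b)$\Rightarrow$(a), (b)$\Rightarrow$(c)$\Rightarrow$(d)$\Rightarrow$(a), and only obtains the direct sum $\mathcal{G}(\varphi,\Lambda)=U^{(0)}\oplus\cdots\oplus U^{(N-1)}$ \emph{under} hypothesis (b), with the $\widetilde{\Lambda}$-invariance of the $U^{(\ell)}$ isolated in Lemma \ref{lem_U_ell_inclusion} (whose explicit periodized multiplier $h^{(\ell)}_{a,b}$ is precisely the concrete instance of your ``replacement lemma''). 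You instead establish the unconditional decomposition $\mathcal{G}(\varphi,\widetilde{\Lambda})=U^{(0)}\oplus\cdots\oplus U^{(N-1)}$ first, which makes (a)$\Leftrightarrow$(b) immediate and automatically yields the ``moreover'' clause; you also supply a direct (c)$\Rightarrow$(b) (composing the two ${\Lambda}^{\circ}$-periodic multipliers $h$ and $k_\ell$) that the paper does not have, and you replace the paper's two-periodization computation in (c)$\Rightarrow$(d) with an orbit-wise support argument. Both of these are valid and arguably more transparent; what the paper's route buys in exchange is that (d)$\Rightarrow$(a) produces the explicit multiplier $h(x,\omega)=e^{2\pi i\ell_0/N}e^{2\pi i(b\cdot x-a\cdot\omega)}$ witnessing $\pi(a,b)\varphi\in\mathcal{G}(\varphi,\Lambda)$, whereas your (d)$\Rightarrow$(c) selection of $\ell_0$ per ${\Lambda}^{\circ}$-orbit is the same construction aimed at a different target. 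The only items you should not leave implicit in a write-up are the closedness of each $U^{(\ell)}$ (needed for the orthogonal direct sum; the argument of Lemma \ref{lem_U_ell_inclusion} adapts verbatim, using $U^{(\ell)}\subseteq\mathcal{G}(\varphi,\widetilde{\Lambda})$ in place of $U^{(\ell)}\subseteq\mathcal{G}(\varphi,\Lambda)$) and the measurability of the orbit-wise index $\ell_0$, which follows since $\{\ell=\ell_0\}$ is detected by the measurable functions $(x,\omega)\mapsto\sum_{(u,\eta)\in I^{(\ell)}\cap[0,1)^{2d}}|Z\varphi(x+u,\omega+\eta)|^2$; you flag both, and neither is an obstacle.
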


\begin{remark}
\label{rmk_after_thm_Gabor_integer_lattice}
\rm
(a) By scaling the Zak transform as
$
Z_{\alpha} f ( x , \omega ) = \sum_{k \in \mathbb{Z}^d} f (x + \alpha k) \, e^{-2 \pi i \alpha k \cdot \omega}
$ where $\alpha > 0$,
Theorem \ref{thm_Gabor_integer_lattice} can be generalized to the case where $\Lambda \subseteq \widetilde{\Lambda} \subseteq \alpha \mathbb{Z}^d \times \tfrac{1}{\alpha} \mathbb{Z}^d$, $\alpha > 0$. \\
(b) It is easily seen that each of $I^{(\ell)}$, $\ell = 0, 1, \ldots , N-1$ is of the form $\{ (u,\eta) \in \mathbb{R}^{2d} \, : \, e^{2 \pi i (b \cdot u - a \cdot \eta)} = \zeta_N \}$ where $\zeta_N$ is an $N$ th root of unity.
While proving Theorem \ref{thm_Gabor_integer_lattice}, we will assume without loss of generality that
\begin{align*}
I^{(\ell)}
= \{ (u,\eta) \in \mathbb{R}^{2d} \, : \, e^{2 \pi i (b \cdot u - a \cdot \eta)} = e^{2 \pi i \ell / N}
\quad \text{for all} \; (a,b) \in \widetilde{\Lambda} \} ,
\quad \ell = 0, 1, \ldots , N-1 .
\end{align*}
(c) Let $\mathcal{K} \subset {\Lambda}^{\circ}$ be a set of representatives of the quotient ${\Lambda}^{\circ} / \widetilde{\Lambda}^{\circ} = \{ I^{(0)} , I^{(1)}, \ldots, I^{(N-1)} \}$, so that $\mathcal{K}$ consists of exactly $N$ elements each of which represents one $I^{(\ell)}$. If $D \subset [0,1)^{2d}$ is a fundamental domain of the lattice ${\Lambda}^{\circ}$, then the finite union $\widetilde{D} = \bigcup_{(u,\eta) \in \mathcal{K}} (u,\eta) + D$ is a fundamental domain of the coarser lattice $\widetilde{\Lambda}^{\circ}$.
The $\widetilde{\Lambda}^{\circ}$-periodization of $D$ is the set $B^{(0)}$, while the $\widetilde{\Lambda}^{\circ}$-periodization $\widetilde{D}$ is $\mathbb{R}^2$.
\end{remark}

For the proof of Theorem \ref{thm_Gabor_integer_lattice}, we need the following lemma.

\begin{lemma}[cf.~Lemma 4.3 in \cite{ACHKM10}]
\label{lem_U_ell_inclusion}
Under the same assumptions as in Theorem \ref{thm_Gabor_integer_lattice}, if $U^{(\ell)} \subseteq \mathcal{G}(\varphi, \Lambda)$ for some $\ell$, then it is a $\widetilde{\Lambda}$-invariant closed subspace of $\mathcal{G}(\varphi, \Lambda)$.
\end{lemma}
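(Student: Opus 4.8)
The plan is to realize $U^{(\ell)}$ as the image of $\mathcal{G}(\varphi,\Lambda)$ under the orthogonal projection $P_\ell := Z^{-1} M_{\chi_{B^{(\ell)}}} Z$, where $M_{\chi_{B^{(\ell)}}}$ is multiplication by $\chi_{B^{(\ell)}}$. Since each $I^{(\ell)}$ is a coset of $\widetilde{\Lambda}^\circ$, the set $B^{(\ell)} = I^{(\ell)} + D$ is $\widetilde{\Lambda}^\circ$-periodic, and because $\widetilde{\Lambda}^\circ \supseteq \mathbb{Z}^{2d}$ it is in particular $\mathbb{Z}^{2d}$-periodic; hence multiplication by $\chi_{B^{(\ell)}}$ preserves quasi-periodicity, and $P_\ell$ is a well-defined orthogonal projection on $L^2(\mathbb{R}^d)$. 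Unwinding the definition, $f \in U^{(\ell)}$ holds iff $Zf = Zg \cdot \chi_{B^{(\ell)}} = Z(P_\ell g)$ for some $g \in \mathcal{G}(\varphi,\Lambda)$, i.e.\ $U^{(\ell)} = P_\ell\, \mathcal{G}(\varphi,\Lambda)$. In particular $U^{(\ell)}$ is automatically a linear subspace. This reformulation drives the two remaining points.

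For closedness I would invoke the standing hypothesis $U^{(\ell)} \subseteq \mathcal{G}(\varphi,\Lambda)$. For any orthogonal projection $P$ and closed subspace $V$ with $PV \subseteq V$ one has $PV = V \cap R(P)$: the inclusion $\subseteq$ combines $PV \subseteq V$ (the hypothesis) with $PV \subseteq R(P)$, while conversely $x \in V \cap R(P)$ gives $x = Px \in PV$. Applying this with $V = \mathcal{G}(\varphi,\Lambda)$ and $P = P_\ell$ yields $U^{(\ell)} = \mathcal{G}(\varphi,\Lambda) \cap R(P_\ell)$, an intersection of two closed subspaces and hence closed.

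The substantive part is $\widetilde{\Lambda}$-invariance. Fix $f \in U^{(\ell)}$ and $(a,b) \in \widetilde{\Lambda}$, and write $Zf = Zg \cdot \chi_{B^{(\ell)}}$ with $g \in \mathcal{G}(\varphi,\Lambda)$; by Lemma \ref{lem_Gabor_ftn_exp_lambda_integer} we have $Zg = h\, Z\varphi$ for some $\Lambda^\circ$-periodic $h$. As $(a,b) \in \widetilde{\Lambda} \subseteq \mathbb{Z}^{2d}$, formula (\ref{Zak_on_pi_quasi_peri}) gives $Z\pi(a,b)f(x,\omega) = e^{2\pi i(b \cdot x - a \cdot \omega)}\, h(x,\omega)\, Z\varphi(x,\omega)\, \chi_{B^{(\ell)}}(x,\omega)$. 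The hard part will be that $e^{2\pi i(b\cdot x - a\cdot\omega)} h$ is \emph{not} $\Lambda^\circ$-periodic, so it is not directly the multiplier of a member of $\mathcal{G}(\varphi,\Lambda)$; I must replace it by a genuinely $\Lambda^\circ$-periodic multiplier that agrees with it on $B^{(\ell)}$. The key observation is that the symplectic phase $e^{2\pi i(b\cdot u - a\cdot\eta)}$ is constant as $(u,\eta)$ ranges over the coset $I^{(\ell)}$, since differences of elements of $I^{(\ell)}$ lie in $\widetilde{\Lambda}^\circ$, which pairs trivially with $\widetilde{\Lambda} \ni (a,b)$ by definition of the adjoint lattice (cf.\ Remark \ref{rmk_after_thm_Gabor_integer_lattice}(b)). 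Writing a point of $B^{(\ell)} = I^{(\ell)} + D$ as $(u_0,\eta_0) + (x_0,\omega_0)$ with $(u_0,\eta_0) \in I^{(\ell)}$ and $(x_0,\omega_0) \in D$, the value of $e^{2\pi i(b\cdot x - a\cdot\omega)} h$ there factors as $e^{2\pi i(b\cdot u_0 - a\cdot\eta_0)}\, e^{2\pi i(b\cdot x_0 - a\cdot\omega_0)}\, h(x_0,\omega_0)$, whose first factor is the coset-constant phase and whose remaining factors depend only on the representative $(x_0,\omega_0) \in D$. This determines a $\Lambda^\circ$-periodic function $\tilde h$ unambiguously on the fundamental domain $D$ of $\Lambda^\circ$. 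Setting $g' := Z^{-1}(\tilde h\, Z\varphi)$, which lies in $\mathcal{G}(\varphi,\Lambda)$ by Lemma \ref{lem_Gabor_ftn_exp_lambda_integer} (note $|\tilde h| = |h|$, so $\tilde h\, Z\varphi \in L^2$), one checks $Z\pi(a,b)f = Zg' \cdot \chi_{B^{(\ell)}}$, whence $\pi(a,b)f \in U^{(\ell)}$. The crux is therefore the coset-constancy of the phase together with the consistency of the $\Lambda^\circ$-periodic extension on $B^{(\ell)}$; the projection reformulation renders linearity and closedness essentially formal.
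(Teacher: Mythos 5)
Your proof is correct, and it reaches the same two crux points as the paper --- the $\widetilde{\Lambda}^{\circ}$-periodicity of $B^{(\ell)}$ and the constancy of the phase $e^{2\pi i(b\cdot u - a\cdot \eta)}$ on the coset $I^{(\ell)}$ --- but packages them differently. For closedness, the paper runs a sequential argument (splitting $\|Zf_n - Zf\|^2$ over $B^{(\ell)}$ and its complement to deduce $\supp Zf \subseteq B^{(\ell)}$ in the limit), whereas your identity $P_\ell V = V \cap R(P_\ell)$ for an orthogonal projection with $P_\ell V \subseteq V$ gets the same conclusion by pure projection algebra; both are elementary, yours is arguably tidier and also delivers linearity for free. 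For the invariance, the paper first proves $\Lambda$-invariance separately, then builds the $\Lambda^{\circ}$-periodic multiplier $h^{(\ell)}_{a,b}$ by periodizing $e^{2\pi i(b\cdot x - a\cdot\omega)}\chi_{B^{(\ell)}}$ over $\Lambda^{\circ}\cap[0,1)^{2d}$, applies Lemma \ref{lem_Gabor_ftn_exp_lambda_integer} with $f$ itself as generator to get $\pi(a,b)f \in \mathcal{G}(f,\Lambda)$, and finally invokes $\mathcal{G}(f,\Lambda)\subseteq U^{(\ell)}$ (which needs the closedness and $\Lambda$-invariance already established). You instead extend the multiplier $\Lambda^{\circ}$-periodically off the fundamental domain $D$ and exhibit a witness $g' = Z^{-1}(\tilde h\, Z\varphi) \in \mathcal{G}(\varphi,\Lambda)$ with $Z\pi(a,b)f = Zg'\cdot\chi_{B^{(\ell)}}$, which certifies $\pi(a,b)f \in U^{(\ell)}$ directly from the definition; this bypasses the intermediate $\Lambda$-invariance step entirely (it is subsumed, since $\Lambda \subseteq \widetilde{\Lambda}$) and does not lean on closedness. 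The two constructions produce the same multiplier on $B^{(\ell)}$; the trade-off is that the paper's periodization formula is self-contained and checkable by a one-line computation, while your fundamental-domain definition requires the (correctly noted) well-definedness check that the value depends only on the representative in $D$.
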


\begin{proof}
The proof is similar to Lemma 4.3 in \cite{ACHKM10}.

Assume that $U^{(\ell)} \subseteq \mathcal{G}(\varphi, \Lambda)$ for some $\ell$.
To see that $U^{(\ell)}$ is closed, suppose that $\{ f_n \}_{n = 1}^{\infty} \subset U^{(\ell)}$ is a sequence that converges to some $f$ in $L^2(\mathbb{R}^d)$. Since $\mathcal{G}(\varphi, \Lambda)$ is closed and $\{ f_n \}_{n = 1}^{\infty} \subset \mathcal{G}(\varphi, \Lambda)$, it follows that $f \in \mathcal{G}(\varphi, \Lambda)$.
Further, since $Z$ is unitary we have
\begin{align*}
\| f_n - f \|^2_{L^2(\mathbb{R}^d)}
&= \| Z (f_n - f) \|^2_{L^2([0,1]^{2d})} \\
&= \| (Z f_n - Z f) \chi_{B^{(\ell)}} \|^2_{L^2([0,1]^{2d})} +  \| (Z f_n - Z f) \chi_{{B^{(\ell)}}^C} \|^2_{L^2([0,1]^{2d})} \\
&= \| Z f_n - Z f \cdot \chi_{B^{(\ell)}} \|^2_{L^2([0,1]^{2d})} +  \| Z f \cdot \chi_{{B^{(\ell)}}^C} \|^2_{L^2([0,1]^{2d})}.
\end{align*}
Since the left hand side converges to zero, we must have $Z f_n \rightarrow Z f \cdot \chi_{B^{(\ell)}}$ in $L^2([0,1]^{2d})$ and $Z f \cdot \chi_{{B^{(\ell)}}^C} = 0$. Since $Z f_n \rightarrow Z f$ in $L^2([0,1]^{2d})$, we have $Z f  = Z f \cdot \chi_{B^{(\ell)}}$ which together with $f \in \mathcal{G}(\varphi, \Lambda)$ implies that $f \in U^{(\ell)}$. Thus, $U^{(\ell)}$ is a closed subspace of $\mathcal{G}(\varphi, \Lambda)$.

Let us first see that $U^{(\ell)}$ is $\Lambda$-invariant. Fix any $f \in U^{(\ell)}$ and let $g \in \mathcal{G}(\varphi, \Lambda)$ be such that $Z f = Z g \cdot \chi_{B^{(\ell)}}$.
For any $(a,b) \in \Lambda \; (\subseteq \mathbb{Z}^{2d})$, we have
\begin{align*}
\Big( Z \pi (a, b) f \Big) (x, \omega)
&= e^{i 2 \pi (b \cdot x - a \cdot \omega)} \, Z f ( x , \omega )
= e^{i 2 \pi (b \cdot x - a \cdot \omega)} \, Z g ( x , \omega ) \cdot \chi_{B^{(\ell)}} ( x , \omega ) \\
&= \Big( Z \pi (a, b) g \Big) (x, \omega) \cdot \chi_{B^{(\ell)}} ( x , \omega ) ,
\end{align*}
where $\pi (a, b) g \in \mathcal{G}(\varphi, \Lambda)$, and thus $\pi (a, b) f \in U^{(\ell)}$. This shows that $U^{(\ell)}$ is $\Lambda$-invariant.

Next, to see that $U^{(\ell)}$ is in fact $\widetilde{\Lambda}$-invariant, we fix any $(a,b) \in \widetilde{\Lambda} \; (\subseteq \mathbb{Z}^{2d})$ and consider a ${\Lambda}^{\circ}$-periodic function given by
\begin{align*}
h^{(\ell)}_{a,b} (x,\omega)
&= \tfrac{1}{M} \, e^{- 2 \pi i \ell / N} \sum_{(u,\eta) \in {\Lambda}^{\circ} \cap [0,1)^{2d}} e^{2 \pi i [b \cdot (x+u) - a \cdot (\omega+\eta)] } \, \chi_{B^{(\ell)}} (x + u , \omega + \eta) ,
\end{align*}
where $M = |\widetilde{\Lambda}^{\circ} / \mathbb{Z}^{2d}| = |\widetilde{\Lambda}^{\circ} \cap [0,1)^{2d}|$.
Then
\begin{align*}
h^{(\ell)}_{a,b} (x,\omega) \, \chi_{B^{(\ell)}} (x , \omega)
&=\tfrac{1}{M} \, e^{- 2 \pi i \ell / N} \sum_{(u,\eta) \in {\Lambda}^{\circ} \cap [0,1)^{2d}} e^{2 \pi i [b \cdot (x+u) - a \cdot (\omega+\eta)] } \, \chi_{B^{(\ell)}} (x + u , \omega + \eta) \, \chi_{B^{(\ell)}} (x , \omega) \\
&=
\tfrac{1}{M} \, e^{- 2 \pi i \ell / N} \sum_{(u,\eta) \in I^{(\ell)} \cap [0,1)^{2d} } e^{2 \pi i \ell / N} \, e^{2 \pi i (b \cdot x - a \cdot \omega) } \, \chi_{B^{(\ell)}} (x + u , \omega + \eta) \\
&= e^{2 \pi i (b \cdot x - a \cdot \omega) } \, \chi_{B^{(\ell)}} (x , \omega ) .
\end{align*}
For any $f \in U^{(\ell)}$, since $\supp Zf \subseteq B^{(\ell)}$, we have
\begin{align*}
\Big( Z \pi (a,b) f \Big) (x, \omega)
&= e^{2 \pi i (b \cdot x - a \cdot \omega) } \, Z f ( x , \omega ) = h^{(\ell)}_{a,b} (x,\omega) \, Z f ( x , \omega ) .
\end{align*}
By Lemma \ref{lem_Gabor_ftn_exp_lambda_integer} and since $U^{(\ell)}$ is $\Lambda$-invariant, it follows that $\pi (a,b) f \in \mathcal{G}(f, \Lambda) \subseteq U^{(\ell)}$.
Therefore, $U^{(\ell)}$ is $\widetilde{\Lambda}$-invariant.
\end{proof}

\medskip

\noindent \textit{Proof of Theorem \ref{thm_Gabor_integer_lattice}.}\;
(a) $\Rightarrow$ (b):
Assume that $\mathcal{G}(\varphi, \Lambda) = \mathcal{G}(\varphi, \widetilde{\Lambda})$.
Fix any $\ell = 0, 1, \ldots , N-1$.
If $f \in U^{(\ell)}$, then exists $g \in \mathcal{G}(\varphi, \Lambda)$ such that $Z f  = Z g \cdot \chi_{B^{(\ell)}}$.
Since $B^{(\ell)}$ is periodic with respect to $\widetilde{\Lambda}^{\circ}$, it follows by Lemma \ref{lem_Gabor_ftn_exp_lambda_integer} that $f \in \mathcal{G}(g, \widetilde{\Lambda})$. Since $\mathcal{G}(g, \widetilde{\Lambda}) \subseteq \mathcal{G}(\varphi, \widetilde{\Lambda}) = \mathcal{G}(\varphi, \Lambda)$, we conclude that $U^{(\ell)} \subseteq \mathcal{G}(\varphi, \Lambda)$. \\
(b) $\Rightarrow$ (a): Assume that $U^{(\ell)} \subseteq \mathcal{G}(\varphi, \Lambda)$ for all $\ell = 0, 1, \ldots , N-1$. Then Lemma \ref{lem_U_ell_inclusion} implies that all $U^{(\ell)}$, $\ell = 0, 1, \ldots , N-1$ are $\widetilde{\Lambda}$-invariant closed subspaces of $\mathcal{G}(\varphi, \Lambda)$.
These subspaces are mutually orthogonal, since the sets $B^{(\ell)}$, $\ell = 0, 1, \ldots , N-1$ are disjoint.
Moreover, every $f \in \mathcal{G}(\varphi, \Lambda)$ can be decomposed as $f = f^{(0)} + \ldots + f^{(N-1)}$, where $f^{(\ell)} = Z^{-1} (Z f \cdot \chi_{B^{(\ell)}}) \in U^{(\ell)}$ for $\ell = 0, 1, \ldots , N-1$.
Therefore, we have the orthogonal direct sum decomposition
$$\mathcal{G}(\varphi, \Lambda) = U^{(0)} \oplus \cdots \oplus U^{(N-1)}. $$
Since all $U^{(\ell)}$ are $\widetilde{\Lambda}$-invariant, so is $\mathcal{G}(\varphi, \Lambda)$. \\
(b) $\Rightarrow$ (c): This is trivial, since $\varphi \in \mathcal{G}(\varphi, \Lambda)$. \\
(c) $\Rightarrow$ (d): Assume that $Z^{-1} ( Z \varphi \cdot \chi_{B^{(\ell)}} ) \in \mathcal{G}(\varphi, \Lambda)$ for all $\ell = 0, 1, \ldots , N-1$.
Then for each $\ell$, Lemma \ref{lem_Gabor_ftn_exp_lambda_integer} implies that there exists a ${\Lambda}^{\circ}$-periodic measurable function $h^{(\ell)} ( x , \omega )$ such that
\begin{align*}
Z \varphi ( x , \omega ) \cdot \chi_{B^{(\ell)}} ( x , \omega ) = h^{(\ell)} ( x , \omega ) \,  Z \varphi ( x , \omega ) .
\end{align*}
By a standard periodization trick, we get
\begin{align}
\label{eqn_peri_trick1}
\sum_{(u,\eta) \in {\Lambda}^{\circ} \cap [0,1)^{2d}} | Z \varphi (x + u , \omega + \eta) |^2 \, \chi_{B^{(\ell)}} (x + u , \omega + \eta)
= | h^{(\ell)} ( x , \omega ) |^2 \sum_{(u,\eta) \in {\Lambda}^{\circ} \cap [0,1)^{2d}} | Z \varphi (x + u , \omega + \eta) |^2  ,
\end{align}
and
\begin{align}
\label{eqn_peri_trick2}
\chi_{B^{(\ell)}} ( x , \omega )  \sum_{(u,\eta) \in \widetilde{\Lambda}^{\circ} \cap [0,1)^{2d}} | Z \varphi (x + u , \omega + \eta) |^2
= | h^{(\ell)} ( x , \omega ) |^2 \sum_{(u,\eta) \in \widetilde{\Lambda}^{\circ} \cap [0,1)^{2d}} | Z \varphi (x + u , \omega + \eta) |^2  .
\end{align}
Note that the left hand sides of (\ref{eqn_peri_trick1}) and (\ref{eqn_peri_trick2}) coincide if $( x , \omega ) \in B^{(\ell)}$. Thus, for a.e.~$(x,\omega) \in B^{(\ell)}$,
\begin{align*}
\sum_{(u,\eta) \in \widetilde{\Lambda}^{\circ} \cap [0,1)^{2d}} | Z \varphi (x + u , \omega + \eta) |^2
&= | h^{(\ell)} ( x , \omega ) |^2 \sum_{(u,\eta) \in \widetilde{\Lambda}^{\circ} \cap [0,1)^{2d}} | Z \varphi (x + u , \omega + \eta) |^2 \\
&= | h^{(\ell)} ( x , \omega ) |^2 \sum_{(u,\eta) \in {\Lambda}^{\circ} \cap [0,1)^{2d}} | Z \varphi (x + u , \omega + \eta) |^2 ,
\end{align*}
from which we see that if $\sum_{(u,\eta) \in \widetilde{\Lambda}^{\circ} \cap [0,1)^{2d}} | Z \varphi (x + u , \omega + \eta) |^2 \neq 0$, then $| h^{(\ell)} ( x , \omega ) |^2 = 1$ and in turn, $\sum_{(u,\eta) \in ({\Lambda}^{\circ} \backslash \widetilde{\Lambda}^{\circ}) \cap [0,1)^{2d}} | Z \varphi (x + u , \omega + \eta) |^2 = 0$.
Since the sets $B^{(\ell)}$, $\ell = 0, 1, \ldots , N-1$ form a partition of $\mathbb{R}^{2d}$, we conclude that for a.e.~$(x,\omega) \in \mathbb{R}^{2d}$, $\sum_{(u,\eta) \in \widetilde{\Lambda}^{\circ} \cap [0,1)^{2d}} | Z \varphi (x + u , \omega + \eta) |^2 \neq 0$ implies $\sum_{(u,\eta) \in ({\Lambda}^{\circ} \backslash \widetilde{\Lambda}^{\circ}) \cap [0,1)^{2d}} | Z \varphi (x + u , \omega + \eta) |^2 = 0$.
Then (d) follows by observing that Zak transform is quasi-periodic. \\
(d) $\Rightarrow$ (a):
Assume that (d) holds, and fix any $(a,b) \in \widetilde{\Lambda} \; (\subseteq \mathbb{Z}^{2d})$.
We will show $\pi(a,b) \varphi \in \mathcal{G}(\varphi, \Lambda)$ using Lemma \ref{lem_Gabor_ftn_exp_lambda_integer}, more precisely, by constructing a ${\Lambda}^{\circ}$-periodic measurable function $h: \mathbb{R}^{2d} \rightarrow \mathbb{C}$ such that $(Z \pi(a,b) \varphi) ( x , \omega ) = h ( x , \omega ) \,  Z \varphi ( x , \omega )$.
Noting that $D$ is a fundamental domain of the lattice ${\Lambda}^{\circ}$, we will define $h$ on $D$ and extend it ${\Lambda}^{\circ}$-periodically to $\mathbb{R}^{2d}$.
By assumption, the set of all $(x , \omega) \in D$ for which (\ref{eqn_cond_d_of_thm_Gabor_integer_lattice}) is violated is a measure zero set which we denote by $D_0 \; (\subset D)$.
Define $h (x, \omega) = 0$ for $(x,\omega) \in D_0$.
Next, fix any $( x , \omega )$ in $D \backslash D_0$.
\begin{itemize}
\item
If $\sum_{(u,\eta) \in I^{(\ell)} \cap [0,1)^{2d}} | Z \varphi (x + u , \omega + \eta) |^2 = 0$ for all $\ell = 0, 1, \ldots , N-1$, equivalently, if $Z \varphi ( x + u , \omega + \eta ) = 0$ for all $(u,\eta) \in {\Lambda}^{\circ} \cap [0,1)^{2d}$, then define $h (x, \omega) = 0$.

\item
Otherwise, there exists a unique $0 \leq \ell_0 \leq N-1$ such that $\sum_{(u,\eta) \in I^{(\ell)} \cap [0,1)^{2d}} | Z \varphi (x + u , \omega + \eta) |^2 = 0$ for all $\ell$ except $\ell_0$, equivalently, $Z \varphi ( x + u , \omega + \eta ) = 0$ for all $(u,\eta) \in ({\Lambda}^{\circ} \backslash I^{(\ell_0)}) \cap [0,1)^{2d}$.
We define $h (x, \omega) = e^{2 \pi i \ell_0 / N} \cdot e^{2 \pi i (b \cdot x - a \cdot \omega) }$.
Observe that for any $(u,\eta) \in I^{(\ell_0)} \cap [0,1)^{2d}$, we have $e^{2 \pi i [b \cdot (x + u) - a \cdot (\omega + \eta)] } = e^{2 \pi i \ell_0 / N} \cdot e^{2 \pi i (b \cdot x - a \cdot \omega) } = h (x, \omega)$.
Combining with the fact that $Z \varphi ( x + u , \omega + \eta ) = 0$ for $(u,\eta) \in ({\Lambda}^{\circ} \backslash I^{(\ell_0)}) \cap [0,1)^{2d}$, we obtain that for all $(u,\eta) \in {\Lambda}^{\circ} \cap [0,1)^{2d}$,
$$
e^{2 \pi i [b \cdot (x + u) - a \cdot (\omega + \eta)] } \, Z \varphi ( x + u , \omega + \eta ) = h (x, \omega) \, Z \varphi ( x + u , \omega + \eta ) .
$$
\end{itemize}
With $h (x, \omega)$ defined on $D$ as above, it follows that for all $(u,\eta) \in {\Lambda}^{\circ} \cap [0,1)^{2d}$,
$$
e^{2 \pi i [b \cdot (x + u) - a \cdot (\omega + \eta)] } \, Z \varphi ( x + u , \omega + \eta ) = h (x, \omega) \, Z \varphi ( x + u , \omega + \eta ) ,
\quad \text{a.e.} \; ( x , \omega ) \in D .
$$
This in fact holds for all $(u,\eta) \in {\Lambda}^{\circ}$, since ${\Lambda}^{\circ} \supseteq \mathbb{Z}^{2d}$ and Zak transform is quasi-periodic.
Therefore, with $h (x, \omega)$ extended ${\Lambda}^{\circ}$-periodically from $D$ to $\mathbb{R}^{2d}$, we have
$$
e^{2 \pi i (b \cdot x - a \cdot \omega) } \, Z \varphi ( x , \omega )
= h (x, \omega) \, Z \varphi ( x , \omega )
\quad \text{a.e.},
$$
From (\ref{Zak_on_pi_quasi_peri}) and Lemma \ref{lem_Gabor_ftn_exp_lambda_integer}, we conclude that $\pi(a,b) \varphi \in \mathcal{G}(\varphi, \Lambda)$.
   \hfill $\Box$ %\hfill $\square$ %end of proof

\begin{corollary}
\label{cor_Gabor_integer_lattice}
Let $\varphi \in L^2(\mathbb{R}^d)$ and let $\Lambda \subseteq \mathbb{Z}^{2d}$ be a lattice.
Then $\mathcal{G}(\varphi, \Lambda)$ is $\mathbb{Z}^{2d}$-invariant if and only if $Z \varphi (x , \omega )$ vanishes a.e.~on $[0,1)^{2d} \backslash D$, where $D \subset [0,1)^{2d}$ is a fundamental domain  of the lattice ${\Lambda}^{\circ}$.
When $(\varphi, \Lambda)$ is a Riesz basis for $\mathcal{G}(\varphi, \Lambda)$, the latter condition is refined to: $Z \varphi (x , \omega ) \neq 0$ a.e.~on $D$ and $Z \varphi (x , \omega ) = 0$ a.e.~on $[0,1)^{2d} \backslash D$.
\end{corollary}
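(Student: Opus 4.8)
The plan is to specialize Theorem \ref{thm_Gabor_integer_lattice} to the case $\widetilde{\Lambda} = \Z^{2d}$ and then translate its condition (d) into a support statement for $Z\varphi$. With $\widetilde{\Lambda} = \Z^{2d}$ one has $\widetilde{\Lambda}^{\circ} = \Z^{2d}$, so each coset $I^{(\ell)}$ of ${\Lambda}^{\circ}/\Z^{2d}$ meets $[0,1)^{2d}$ in exactly one point, and condition (d) reads: for a.e.~$(x,\omega)$, at most one point of the ${\Lambda}^{\circ}$-orbit $\{(x,\omega)+(u,\eta) \, : \, (u,\eta)\in{\Lambda}^{\circ}\cap[0,1)^{2d}\}$ carries a nonzero value of $Z\varphi$. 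Throughout I would use that $|Z\varphi|$ is $\Z^{2d}$-periodic (immediate from the quasi-periodicity of the Zak transform), so that the support of $Z\varphi$ descends to the torus $[0,1)^{2d}$ and points may be reduced modulo $\Z^{2d}$.

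Next I would establish that condition (d) is equivalent to the existence of a fundamental domain $D \subset [0,1)^{2d}$ of ${\Lambda}^{\circ}$ outside of which $Z\varphi$ vanishes a.e. The direction ``$D$ exists $\Rightarrow$ (d)'' is immediate: if $(x,\omega)\in D$ and $(u,\eta)\in{\Lambda}^{\circ}\backslash\Z^{2d}$, then the reduction of $(x+u,\omega+\eta)$ modulo $\Z^{2d}$ is a nonzero ${\Lambda}^{\circ}$-translate of $(x,\omega)$, hence lies in $[0,1)^{2d}\backslash D$, where $Z\varphi$ vanishes a.e.; the $\Z^{2d}$-periodicity of $|Z\varphi|$ then forces $Z\varphi(x+u,\omega+\eta)=0$. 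For the converse I would start from any fixed fundamental domain $D_0$ of ${\Lambda}^{\circ}$ in $[0,1)^{2d}$ and rearrange it: since (d) says that the support set $E=\{Z\varphi\neq 0\}$ meets each ${\Lambda}^{\circ}$-orbit in at most one of its $N$ points, I would replace, orbit by orbit, the $D_0$-representative by the unique point of $E$ whenever the orbit meets $E$, keeping the $D_0$-representative otherwise. This produces a fundamental domain $D\supseteq E$ (a.e.), so $Z\varphi$ vanishes a.e.~on $[0,1)^{2d}\backslash D$. The only genuine obstacle here is to carry out this orbit-wise rearrangement \emph{measurably}; I would do so by partitioning $D_0$ according to which of the finitely many translates in ${\Lambda}^{\circ}\cap[0,1)^{2d}$ lands in $E$, a finite measurable decomposition, and then translating each piece accordingly.

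For the Riesz basis refinement I would invoke the representation of Lemma \ref{lem_Gabor_ftn_exp_lambda_integer} together with the unitarity of $Z$. For $f=\sum_{(u,\eta)\in\Lambda}c_{u,\eta}\,\pi(u,\eta)\varphi$ one has $Zf=h\cdot Z\varphi$ with the ${\Lambda}^{\circ}$-periodic symbol $h(x,\omega)=\sum_{(u,\eta)\in\Lambda}c_{u,\eta}\,e^{2\pi i(\eta\cdot x-u\cdot\omega)}$. Since the characters $\{e^{2\pi i(\eta\cdot x-u\cdot\omega)}\}_{(u,\eta)\in\Lambda}$ form an orthogonal basis of $L^2(D)$ and $|h|^2$ is ${\Lambda}^{\circ}$-periodic, periodizing over ${\Lambda}^{\circ}\cap[0,1)^{2d}$ gives
$$ \textstyle \| \{c_{u,\eta}\} \|_{\ell^2}^2 = N\int_D |h|^2 , \qquad \|f\|^2=\|Zf\|_{L^2([0,1)^{2d})}^2=\int_D |h|^2\,G , $$
where $G(x,\omega)=\sum_{(u,\eta)\in{\Lambda}^{\circ}\cap[0,1)^{2d}}|Z\varphi(x+u,\omega+\eta)|^2$ and $N=|{\Lambda}^{\circ}\cap[0,1)^{2d}|$. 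As $\{c_{u,\eta}\}$ runs over $\ell^2(\Lambda)$, $h|_D$ runs over all of $L^2(D)$, so the Riesz bounds $A\|c\|^2\le\|f\|^2\le B\|c\|^2$ hold if and only if $0<AN\le G\le BN<\infty$ a.e.~on $D$; in particular $G>0$ a.e.~on $D$. Finally, under the $\Z^{2d}$-invariance condition just established, every off-diagonal term in $G$ vanishes on $D$ (each nonzero $(u,\eta)$ sends a point of $D$ outside $D$), so $G=|Z\varphi|^2$ on $D$, and $G>0$ a.e.~forces $Z\varphi\neq 0$ a.e.~on $D$; combined with the vanishing outside $D$ this yields the refined statement. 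I expect the measurable construction of $D$ to be the main obstacle, while the constant bookkeeping (the factor $N$ and the normalization of the character system) in the Riesz computation is routine.
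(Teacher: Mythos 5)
Your proposal is correct and follows essentially the same route as the paper: specialize condition (d) of Theorem \ref{thm_Gabor_integer_lattice} to $\widetilde{\Lambda}=\mathbb{Z}^{2d}$, rephrase it as the existence of a fundamental domain of ${\Lambda}^{\circ}$ supporting $Z\varphi$, and use the Zak-transform criterion for the Riesz basis property to get $\sum_{(u,\eta)\in{\Lambda}^{\circ}\cap[0,1)^{2d}}|Z\varphi(x+u,\omega+\eta)|^2>0$ a.e. You merely supply details the paper leaves implicit (the measurable orbit-wise rearrangement producing $D$, and the derivation of the Riesz bound inequality), and these are carried out correctly.
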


\begin{proof}
By Theorem \ref{thm_Gabor_integer_lattice} and the quasi-periodicity of Zak transform, it follows that $\mathcal{G}(\varphi, \Lambda)$ is $\mathbb{Z}^{2d}$-invariant if and only if for a.e.~$(x,\omega)$, $Z \varphi (x , \omega ) \neq 0$ implies that $Z \varphi ( x + u , \omega + \eta ) = 0$ for all $(u,\eta) \in ({\Lambda}^{\circ} \cap [0,1)^{2d}) \backslash \{ (0,0) \}$.
The latter is equivalent to that for a.e.~$(x,\omega)$, we have $Z \varphi ( x + u , \omega + \eta ) \neq 0$ for at most one $(u,\eta)$ in ${\Lambda}^{\circ} \cap [0,1)^{2d}$, which holds if and only if $Z \varphi (x , \omega )$ vanishes a.e.~on $[0,1)^{2d} \backslash D$ where $D \subset [0,1)^{2d}$ is a fundamental domain of the lattice ${\Lambda}^{\circ}$.

For the second part, observe that $(\varphi, \Lambda)$ is a Riesz basis for $\mathcal{G}(\varphi, \Lambda)$ with Riesz bounds $B \geq A > 0$ if and only if
\begin{align*}
m A \leq \sum_{(u,\eta) \in {\Lambda}^{\circ} \cap [0,1)^{2d}} | Z \varphi (x + u , \omega + \eta) |^2 \leq m B
\quad \text{a.e.},
\end{align*}
where $m = |{\Lambda}^{\circ} \cap [0,1)^{2d}| \geq 1$.
In this case, we have $Z \varphi (x , \omega ) \neq 0$ a.e.~at least on a fundamental domain of the lattice ${\Lambda}^{\circ}$. The claim is then straightforward.
\end{proof}

\begin{remark}
\label{rmk_Gabor_extra_inv}
\rm
In Proposition \ref{prop_SIS_gen_inv} extra invariance of $\mathcal{S} (\varphi, \mathbb{Z}^{2d})$ is characterized through zeros of $\widehat{\varphi}(\xi,\omega)$, while in Theorem \ref{thm_Gabor_integer_lattice} extra invariance of $\mathcal{G}(\varphi, \Lambda)$ is characterized through zeros of $Z \varphi (x , \omega )$.
Similar to Remark \ref{rmk_SIS_extra_inv}, we have following.

Assume that $\mathcal{G}(\varphi, \Lambda) = \mathcal{G}(\varphi, \widetilde{\Lambda})$, where $\varphi \in L^2(\mathbb{R}^{2d})$ and $\Lambda, \widetilde{\Lambda} \subseteq \mathbb{R}^{2d}$ are lattices satisfying $\Lambda \subsetneq \widetilde{\Lambda} \subseteq \mathbb{Z}^{2d}$ (so that ${\Lambda}^{\circ} \supsetneq \widetilde{\Lambda}^{\circ} \supseteq \mathbb{Z}^{2d}$).
Then every $f \in \mathcal{G}(\varphi, \Lambda)$ can be represented in two different ways (in the Zak transform domain):
$$
h ( x , \omega ) \,  Z \varphi ( x , \omega ) = Z f ( x , \omega ) = \widetilde{h} ( x , \omega ) \,  Z \varphi ( x , \omega ),
$$
where $h ( x , \omega )$ is ${\Lambda}^{\circ}$-periodic and $\widetilde{h} ( x , \omega )$ is $\widetilde{\Lambda}^{\circ}$-periodic (see Lemma \ref{lem_Gabor_ftn_exp_lambda_integer}), and thus we have
\begin{align*}
h ( x , \omega ) = \widetilde{h} ( x , \omega )
\quad \text{for a.e.} \; ( x , \omega ) \;\; \text{such that} \; Z \varphi ( x , \omega ) \neq 0.
\end{align*}
By picking $h ( x , \omega )$ and $\widetilde{h} ( x , \omega )$ that are genuinely ${\Lambda}^{\circ}$-periodic and $\widetilde{\Lambda}^{\circ}$-periodic respectively, and exploiting the fact that $\widetilde{\Lambda}^{\circ} \subsetneq {\Lambda}^{\circ}$, we obtain some restrictions on the set $\{ (x,\omega) \, : \, Z \varphi ( x , \omega ) \neq 0 \}$ which is defined up to a measure zero set.
Clearly, it is impossible that $Z \varphi ( x , \omega ) \neq 0$ for a.e.~$( x , \omega )$ in $\mathbb{R}^{2d}$. This yields condition (d) in Theorem \ref{thm_Gabor_integer_lattice}.
\end{remark}

\medskip

\subsubsection{The case $\Lambda = \mathbb{Z}^{2d}$ with $\widetilde{\Lambda} = \mathbb{R}^d \times \mathbb{Z}^d$, $\mathbb{Z}^d \times \mathbb{R}^d$, $\mathbb{R}^{2d}$.}
\label{subsubsec:ZxZ_case}
$ $

To compare with shift-invariant spaces $\mathcal{S}(\varphi, \mathbb{Z}^{2d})$, we consider the lattice $\Lambda = \mathbb{Z}^{2d}$.
We will treat the extreme cases $\widetilde{\Lambda} = \mathbb{R}^d \times \mathbb{Z}^d$, $\mathbb{Z}^d \times \mathbb{R}^d$, $\mathbb{R}^{2d}$.
Note that since $\mathbb{R}^d \times \mathbb{Z}^d$ is the smallest closed subgroup of $\mathbb{R}^{2d}$ containing $\mathbb{R}^d \times \{ 0 \}$ and $\mathbb{Z}^{2d}$, the space $\mathcal{G}(\varphi, \mathbb{Z}^{2d})$ is $\mathbb{R}^d \times \mathbb{Z}^d$-invariant if and only if it is $\mathbb{R}^d \times \{ 0 \}$-invariant.
Similarly, the space $\mathcal{G}(\varphi, \mathbb{Z}^{2d})$ is $\mathbb{Z}^d \times \mathbb{R}^d$-invariant if and only if it is $\{ 0 \} \times \mathbb{R}^d$-invariant.

\begin{proposition}
\label{prop_Gabor_ZxZ trans_inv_mod_inv}
Let $\varphi \in L^2(\mathbb{R}^d)$.
\begin{itemize}
\item[(a)]
$\mathcal{G}(\varphi, \mathbb{Z}^{2d})$ is invariant under all translations ($\mathbb{R}^d \times \{ 0 \}$-invariant) if and only if there exists a measurable set $E \subset [0,1)^d$ such that $Z \varphi (x , \omega ) \neq 0$ a.e.~on $[0,1)^d \times E$ and $Z \varphi (x , \omega ) = 0$ a.e.~on $[0,1)^d \times ([0,1)^d \backslash E)$.

\item[(b)]
$\mathcal{G}(\varphi, \mathbb{Z}^{2d})$ is invariant under all modulations ($\{ 0 \} \times \mathbb{R}^d$-invariant) if and only if there exists a measurable set $E \subset [0,1)^d$ such that $Z \varphi (x , \omega ) \neq 0$ a.e.~on $E \times [0,1)^d$ and $Z \varphi (x , \omega ) = 0$ a.e.~on $([0,1)^d \backslash E) \times [0,1)^d$.

\item[(c)]
$\mathcal{G}(\varphi, \mathbb{Z}^{2d})$ is invariant under all time-frequency shifts ($\mathbb{R}^{2d}$-invariant) if and only if $\mathcal{G}(\varphi, \mathbb{Z}^{2d})$ is either $\{ 0 \}$ or $L^2(\mathbb{R}^d)$.
Consequently, a nontrivial proper Gabor subspace $\mathcal{G}(\varphi, \mathbb{Z}^{2d})$ of $L^2(\mathbb{R}^d)$ cannot be invariant under all time-frequency shifts.
\end{itemize}
\end{proposition}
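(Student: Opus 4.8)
The plan is to carry out the whole analysis in the Zak domain, reducing each invariance question to a statement about the support set $\Sigma := \supp Z\varphi$, regarded as a $\mathbb{Z}^{2d}$-periodic subset of $\mathbb{R}^{2d}$ (well defined up to a null set, since $|Z\varphi|$ is $\mathbb{Z}^{2d}$-periodic by quasi-periodicity). Applying Lemma \ref{lem_Gabor_ftn_exp_lambda_integer} with $\Lambda = \mathbb{Z}^{2d}$, for which $\Lambda^{\circ} = \mathbb{Z}^{2d}$, the multiplier $h$ is only required to be $\mathbb{Z}^{2d}$-periodic and is therefore arbitrary on $[0,1)^{2d}$; since $Zf$ and $Z\varphi$ are quasi-periodic, this gives the clean description
\begin{align*}
f \in \mathcal{G}(\varphi, \mathbb{Z}^{2d}) \quad \Longleftrightarrow \quad \supp Zf \subseteq \Sigma \;\; \text{(a.e.)} .
\end{align*}
I would then record how time-frequency shifts move Zak supports: by (\ref{Zak_on_pi}), $Z(T_u f)(x,\omega) = Zf(x-u,\omega)$ and $Z(M_{\xi} f)(x,\omega) = e^{2\pi i \xi \cdot x} \, Zf(x,\omega-\xi)$, so $T_u$ translates $\supp Zf$ by $(u,0)$ and $M_{\xi}$ translates it by $(0,\xi)$, the unimodular modulation factor being irrelevant to the support.

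For part (a) I would show that $\mathbb{R}^d \times \{0\}$-invariance of $V = \mathcal{G}(\varphi, \mathbb{Z}^{2d})$ is equivalent to $\Sigma + (u,0) = \Sigma$ for every $u \in \mathbb{R}^d$. Sufficiency is immediate from the support description: if $\Sigma$ is invariant under shifts in $x$ and $\supp Zf \subseteq \Sigma$, then $\supp Z(T_u f) = \supp Zf + (u,0) \subseteq \Sigma$, whence $T_u f \in V$. For necessity it suffices to test the single generator $f = \varphi$, whose Zak support is exactly $\Sigma$: invariance yields $\Sigma + (u,0) \subseteq \Sigma$ for all $u$, and applying this with $-u$ forces equality. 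A $\mathbb{Z}^{2d}$-periodic set invariant under all translations in the $x$-variable does not depend on $x$, so $\Sigma = \mathbb{R}^d \times E'$ for some $E' \subseteq \mathbb{R}^d$; the exact $\mathbb{Z}^d$-periodicity of $Z\varphi$ in $\omega$ (the case $k=0$ of quasi-periodicity) makes $E'$ periodic, and hence $\Sigma \cap [0,1)^{2d} = [0,1)^d \times E$ with $E = E' \cap [0,1)^d$. This is exactly the asserted dichotomy for $Z\varphi$.

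Part (b) is the mirror image: modulations shift supports in $\omega$, so the identical argument (again tested on $\varphi$) identifies $\{0\} \times \mathbb{R}^d$-invariance with $\Sigma$ being independent of $\omega$, i.e.\ $\Sigma \cap [0,1)^{2d} = E \times [0,1)^d$. For part (c) I would use that $\mathbb{R}^{2d}$ is topologically generated by $\mathbb{R}^d \times \{0\}$ and $\{0\} \times \mathbb{R}^d$, so that (since $\mathcal{P}(V)$ is a closed subgroup) $\mathbb{R}^{2d}$-invariance is equivalent to simultaneous translation and modulation invariance. By (a) and (b), $\Sigma$ must then be, up to a null set, both a horizontal slab $[0,1)^d \times E_1$ and a vertical slab $E_2 \times [0,1)^d$; a short Fubini argument shows this forces $\Sigma$ to be null or conull in $[0,1)^{2d}$. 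In the null case $Z\varphi = 0$ a.e., so $V = \{0\}$; in the conull case $Z\varphi \neq 0$ a.e., and the support description shows $ZV = L^2([0,1)^{2d})$, i.e.\ $V = L^2(\mathbb{R}^d)$.

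The routine parts are the quasi-periodicity bookkeeping and the Fubini step in (c). The one point that I expect to require genuine care is the necessity direction in (a) and (b): there the inclusion $\Sigma + (u,0) \subseteq \Sigma$ holds for each $u$ only up to a $u$-dependent null set, so passing to a structural conclusion about $\Sigma$ cannot be done naively for the continuum of all $u$ at once. I would resolve this by testing invariance only along a countable dense subgroup, say $u \in \mathbb{Q}^d$: by Fubini there is a conull set of frequencies $\omega$ for which the fiber $\Sigma_\omega = \{x : (x,\omega) \in \Sigma\}$ satisfies $\Sigma_\omega + q = \Sigma_\omega$ (mod null) for every $q \in \mathbb{Q}^d$ simultaneously, and a measurable subset of the torus invariant under a dense subgroup of translations is null or conull (its nonconstant Fourier coefficients must vanish). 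Setting $E = \{\omega : \Sigma_\omega \text{ is conull}\}$ then yields $\Sigma \cap [0,1)^{2d} = [0,1)^d \times E$ up to a null set, which is the needed structural statement; this measure-theoretic tidying, rather than any analytic difficulty, is the crux of the proof.
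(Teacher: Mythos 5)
Your proposal is correct, and it lives in the same framework as the paper's proof (pass to the Zak domain and apply Lemma \ref{lem_Gabor_ftn_exp_lambda_integer} with $\Lambda^{\circ}=\mathbb{Z}^{2d}$), but the decisive step --- the necessity direction in (a) and (b) --- is carried out by a genuinely different argument. The paper never isolates your organizing principle $f\in\mathcal{G}(\varphi,\mathbb{Z}^{2d})\Leftrightarrow \supp Zf\subseteq\supp Z\varphi$; it works directly with the multiplier identity $Z\varphi(x-u,\omega)=h(x,\omega)\,Z\varphi(x,\omega)$ and proves necessity by contradiction: assuming the $x$-fiber of $\supp Z\varphi$ is neither null nor conull for a positive-measure set of $\omega$, it chooses a \emph{single} translation $u$ and an open set $U$ on which the translated support and the complement of the support each have relative measure at least $3/4$, so that on their overlap one gets $0\neq Z\varphi(x-u,\omega)=h(x,\omega)Z\varphi(x,\omega)=0$. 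You instead test invariance only along the countable dense subgroup $\mathbb{Q}^d$, pass to fibers by Fubini, and invoke the fact that a measurable subset of the torus invariant (mod null) under a dense group of translations is null or conull via vanishing of its nonzero Fourier coefficients. Both are sound, and you correctly identified the genuine crux --- that the inclusion $\Sigma+(u,0)\subseteq\Sigma$ holds only up to a $u$-dependent null set, so one cannot quantify over all $u\in\mathbb{R}^d$ naively. Your route buys a cleaner structural statement (the null/conull fiber dichotomy is the explicit mechanism, and it makes part (c) a two-line Fubini argument), at the cost of appealing to the ergodicity of dense translations; the paper's single-translation density-point argument is more elementary but requires the slightly awkward ad hoc setup of the sets $S$, $E_1$, $U$, $W$. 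The sufficiency directions (construct $h$ by division on the fibers where $Z\varphi\neq 0$, set $h=0$ elsewhere) and the logic of part (c) are essentially identical in the two treatments.
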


\begin{proof}
For any $(u,\eta) \in \mathbb{R}^d \times \mathbb{R}^d$, Lemma \ref{lem_Gabor_ftn_exp_lambda_integer} together with (\ref{Zak_on_pi}) implies that $\pi(u,\eta) \varphi \in \mathcal{G}(\varphi, \mathbb{Z}^{2d})$ if and only if there exists $\mathbb{Z}^{2d}$-periodic measurable function $h ( x , \omega )$ satisfying $e^{2 \pi i \eta \cdot x } \, Z \varphi ( x - u , \omega - \eta) = h ( x , \omega ) \, Z \varphi ( x , \omega )$ for a.e.~$( x , \omega ) \in [0,1)^{2d}$.\\
(a) ($\Leftarrow$) Assume that $E \subset [0,1)^d$ is a measurable set such that $Z \varphi (x , \omega ) \neq 0$ a.e.~on $[0,1)^d \times E$ and $Z \varphi (x , \omega ) = 0$ a.e.~on $[0,1)^d \times ([0,1)^d \backslash E)$, and fix any $u \in \mathbb{R}^d$. For a.e.~$\omega_0 \in E$ fixed, we have $Z \varphi ( \cdot , \omega_0 ) \neq 0$ a.e.~and thus exists a measurable function $h ( \cdot , \omega_0 )$ such that $Z \varphi ( \cdot - u , \omega_0) = h ( \cdot , \omega_0 ) \, Z \varphi ( \cdot , \omega_0 )$ a.e.
For a.e.~$\omega_0 \in [0,1)^d \backslash E$ fixed, we always have $Z \varphi ( \cdot - u , \omega_0) = 0 = h ( \cdot , \omega_0 ) \, Z \varphi ( \cdot , \omega_0 )$ a.e.~so we may set $h ( \cdot , \omega_0 ) = 0$.
With $h ( x , \omega )$ defined on $[0,1)^{2d}$ as above (and extended $\mathbb{Z}^{2d}$-periodically over $\mathbb{R}^d$), we have $Z \varphi ( x - u , \omega) = h ( x , \omega ) \, Z \varphi ( x , \omega )$ for a.e.~$( x , \omega ) \in [0,1)^d \times [0,1)^d$. \\
($\Rightarrow$)
Suppose to the contrary that $S \subset [0,1)^d  \times E_1$ is a measurable set with $0 < \mu(S) < \mu (E_1)$ such that $Z \varphi (x , \omega ) \neq 0$ a.e.~on $S$ and $Z \varphi (x , \omega ) = 0$ a.e.~on $([0,1)^d  \times E_1) \backslash S$, where $E_1 \subset [0,1)^d$ is a set of positive measure satisfying $0 < \mu(\{ x \in [0,1)^d \, : \, Z \varphi (x , \omega_0 ) \neq 0 \}) < 1$ for every $\omega_0 \in E_1$. Here $\mu(\cdot)$ denotes the Lebesgue measure.
Then there exist $u \in \mathbb{R}^d$ and an open set $U \subset [0,1)^d  \times E_1$ such that $\mu (U \cap [S + (u,0)] ) \geq \tfrac{3}{4} \mu(U)$ and $\mu ( U \cap [([0,1)^d  \times E_1) \backslash S] ) \geq \tfrac{3}{4} \mu(U)$.
Note that the sets $S + (u,0)$ and $([0,1)^d  \times E_1) \backslash S$ intersect on a set of Lebesgue measure at least $\mu(U) / 2$ which we will denote by $W$.
Since $\pi(u,0) \varphi \in \mathcal{G}(\varphi, \mathbb{Z}^{2d})$, there exists a $\mathbb{Z}^{2d}$-periodic measurable function $h ( x , \omega )$ satisfying $Z \varphi ( x - u , \omega ) = h ( x , \omega ) \, Z \varphi ( x , \omega )$ for a.e.~$( x , \omega ) \in [0,1)^{2d}$. However, for a.e.~$( x , \omega ) \in W$, we have $0 \neq Z \varphi ( x - u , \omega ) = h ( x , \omega ) \, Z \varphi ( x , \omega ) = 0$, which is a contradiction. \\
(b) The proof of (b) is similar to (a). \\
(c) The implication ($\Leftarrow$) is obvious. \\
($\Rightarrow$) Assume that $\mathcal{G}(\varphi, \mathbb{Z}^{2d})$ is invariant under all time-frequency shifts. From (a) and (b), it follows that either (i) $Z \varphi (x , \omega ) = 0$ a.e.~on $[0,1)^{2d}$ or (ii) $Z \varphi (x , \omega ) \neq 0$ a.e.~on $[0,1)^{2d}$. The proof is complete by observing that each (i) and (ii) corresponds to $\mathcal{G}(\varphi, \mathbb{Z}^{2d}) = \{ 0 \}$ and $\mathcal{G}(\varphi, \mathbb{Z}^{2d}) = L^2(\mathbb{R}^d)$, respectively (cf.~\cite[Theorem 4.3.3]{HW89}).
\end{proof}

\begin{remark}[Shift-invariant spaces vs.~Gabor spaces generated by integer lattices]
\label{rmk_SIS_vs_Gabor}
\rm
$ $\\
(i) There is no $\varphi \in L^2(\mathbb{R}^d)$ such that $\mathcal{S} (\varphi, \mathbb{Z}^d) = L^2(\mathbb{R}^d)$.
Indeed, with $\varphi \in L^2(\mathbb{R}^d)$ fixed, not every function $\widehat{f}(\xi)$ of $L^2(\mathbb{R}^d)$ can be expressed in the form $\widehat{f}(\xi) = m(\xi) \, \widehat{\varphi}(\xi)$ where $m (\xi)$ is $\mathbb{Z}^d$-periodic, hence $\mathcal{S} (\varphi, \mathbb{Z}^d) \neq L^2(\mathbb{R}^d)$. \\
(ii) There exists $\varphi \in L^2(\mathbb{R}^d)$ such that $\mathcal{G}(\varphi, \mathbb{Z}^{2d}) = L^2(\mathbb{R}^d)$.
For example, $\mathcal{G}(\chi_{[0,1)^d} , \mathbb{Z}^{2d}) = L^2(\mathbb{R}^d)$ where $\chi_{[0,1)^d}$ is the characteristic function of $[0,1)^d$.
In fact, since $| Z \chi_{[0,1)^d} ( x , \omega )| = 1$ for all $(x,\omega) \in \mathbb{R}^d \times \mathbb{R}^d$, the Gabor system $(\chi_{[0,1)^d} , \mathbb{Z}^{2d})$ is an orthonormal basis for $L^2(\mathbb{R}^d)$ (cf.~\cite[Corollary 8.3.2]{Gro01}). \\
(iii) There exists a nontrivial proper shift-invariant space $\mathcal{S} (\varphi, \mathbb{Z}^d)$ of $L^2(\mathbb{R}^d)$ which is invariant under all translations
 (cf.~Proposition \ref{prop_SIS_trans_inv}).
For example, the shift-invariant space generated by $\varphi(x)=\sin(\pi x)/ (\pi x)$ is invariant under all translations. This space is also known as the Paley-Wiener space of signals bandlimited to $[-1/2, 1/2]$. \\
(iv) There is no nontrivial proper Gabor subspace $\mathcal{G}(\varphi, \mathbb{Z}^{2d})$ of $L^2(\mathbb{R}^d)$ which is invariant under all time-frequency shifts (Proposition \ref{prop_Gabor_ZxZ trans_inv_mod_inv}).
\end{remark}

\medskip

\begin{example}
\label{example_4Zx2Z}
\rm
We consider a Gabor space $\mathcal{G}(\varphi, 4 \mathbb{Z} \times 2 \mathbb{Z})$ where $\varphi \in L^2(\mathbb{R})$, which corresponds to the case ($d = 1$, $p_1 = 4$, $p_2 = 2$).

First, we pick a pair $(a,b)$ in $\mathbb{Z}_{4} \times \mathbb{Z}_{2}$ and let $\widetilde{\Lambda} \subset \mathbb{R}^2$ be the smallest closed subgroup of $\mathbb{R}^2$ containing $4 \mathbb{Z} \times 2 \mathbb{Z}$ and $(a,b)$.
Then $4 \mathbb{Z} \times 2 \mathbb{Z} \subseteq \widetilde{\Lambda} \subseteq \mathbb{Z}^2$ so that $\mathbb{Z}^2 \subseteq \widetilde{\Lambda}^{\circ} \subseteq \tfrac{1}{2} \mathbb{Z} \times \tfrac{1}{4} \mathbb{Z}$.
For illustration of $\widetilde{\Lambda} \supseteq 4 \mathbb{Z} \times 2 \mathbb{Z}$, we observe $\widetilde{\Lambda}$ in the region $[0,4) \times [0,2)$ which is a fundamental domain of $4 \mathbb{Z} \times 2 \mathbb{Z}$; the generating element $(a,b) \in \widetilde{\Lambda}$ is marked in red.
Likewise, for illustration of $\widetilde{\Lambda}^{\circ} \supseteq \mathbb{Z}^2$, we observe $\widetilde{\Lambda}^{\circ}$ in the region $[0,1)^2$ which is a fundamental domain of $\mathbb{Z}^2$; the complement of $\widetilde{\Lambda}^{\circ}$ in $\tfrac{1}{2} \mathbb{Z} \times \tfrac{1}{4} \mathbb{Z}$, i.e., $(\tfrac{1}{2} \mathbb{Z} \times \tfrac{1}{4} \mathbb{Z}) \backslash \widetilde{\Lambda}^{\circ}$, is marked as empty nodes.
Overlapped with $\widetilde{\Lambda}^{\circ}$, we depict the set $B^{(0)}$ which is the $\widetilde{\Lambda}^{\circ}$-periodic extension of $[0,\tfrac{1}{2}) \times [0,\tfrac{1}{4})$ to $\mathbb{R}^{2}$.
In all figures, the edges of $[0,1)^2$ are drawn in thick line.

\medskip

\noindent
\begin{minipage}{0.5\columnwidth}
\noindent
(i) If $(a,b) = (0,0)$, then \\
$\widetilde{\Lambda} = 4 \mathbb{Z} \times 2 \mathbb{Z}$,
$\widetilde{\Lambda}^{\circ} = \tfrac{1}{2} \mathbb{Z} \times \tfrac{1}{4} \mathbb{Z}$.
\begin{center}
\begin{tikzpicture}[scale=1.1]
    \draw[black!30] (-0.23,-0.23) grid (4.23,2.23);
        \foreach \x in {0,4} {
            \foreach \y in {0,2} {
                \draw (\x,\y) circle (0.7mm) [fill=black];
            }
        }
    \draw (0,0) rectangle (1,1);
\draw (0,0) circle (0.7mm) [fill=red];
\node[below] at (2,-0.2) {$\widetilde{\Lambda}$};
\end{tikzpicture}
\hspace{5mm}
\begin{tikzpicture}[scale=1.1]
\draw (0,0) rectangle (1,1) [fill=blue!30];
    \draw[black!30] (-0.23,-0.23) grid (1.23,1.23);
        \foreach \x in {0,0.5,1} {
            \foreach \y in {0,0.25,0.5,0.75,1} {
                \draw (\x,\y) circle (0.7mm) [fill=black];
            }
        }
    \draw (0,0) rectangle (1,1);
\node[below] at (0.6,-0.2) {$\widetilde{\Lambda}^{\circ}$};
\end{tikzpicture}
\end{center}
\end{minipage}
\begin{minipage}{0.5\columnwidth}
\noindent
(ii) If $(a,b) = (1,0)$ or $(3,0)$, then \\
$\widetilde{\Lambda} = \mathbb{Z} \times 2 \mathbb{Z}$,
$\widetilde{\Lambda}^{\circ} = \tfrac{1}{2} \mathbb{Z} \times \mathbb{Z}$.
\begin{center}
\begin{tikzpicture}[scale=1.1]
    \draw[black!30] (-0.23,-0.23) grid (4.23,2.23);
        \foreach \x in {0,1,2,3,4} {
            \foreach \y in {0,2} {
                \draw (\x,\y) circle (0.7mm) [fill=black];
            }
        }
    \draw (0,0) rectangle (1,1);
\draw (1,0) circle (0.7mm) [fill=red];
\node[below] at (2,-0.2) {$\widetilde{\Lambda}$};
\end{tikzpicture}
\hspace{5mm}
\begin{tikzpicture}[scale=1.1]
\draw (0,0) rectangle (1,0.25) [fill=blue!30];
    \draw[black!30] (-0.23,-0.23) grid (1.23,1.23);
        \foreach \x in {0,0.5,1} {
            \foreach \y in {0,0.25,0.5,0.75,1} {
                \draw (\x,\y) circle (0.7mm) ;
            }
        }
        \foreach \x in {0,0.5,1} {
            \foreach \y in {0,1} {
                \draw (\x,\y) circle (0.7mm) [fill=black];
            }
        }
    \draw (0,0) rectangle (1,1);
\node[below] at (0.6,-0.2) {$\widetilde{\Lambda}^{\circ}$};
\end{tikzpicture}
\end{center}
\end{minipage}

\noindent
\begin{minipage}{0.5\columnwidth}
\noindent
(iii) If $(a,b) = (2,0)$, then \\
$\widetilde{\Lambda} = 2 \mathbb{Z} \times 2 \mathbb{Z}$, $\widetilde{\Lambda}^{\circ} = \tfrac{1}{2} \mathbb{Z} \times \tfrac{1}{2} \mathbb{Z}$.
\begin{center}
\begin{tikzpicture}[scale=1.1]
    \draw[black!30] (-0.23,-0.23) grid (4.23,2.23);
        \foreach \x in {0,2,4} {
            \foreach \y in {0,2} {
                \draw (\x,\y) circle (0.7mm) [fill=black];
            }
        }
    \draw (0,0) rectangle (1,1);
\node[below] at (2,-0.2) {$\widetilde{\Lambda}$};
\draw (2,0) circle (0.7mm) [fill=red];
\end{tikzpicture}
\hspace{5mm}
\begin{tikzpicture}[scale=1.1]
\draw (0,0) rectangle (1,0.25) [fill=blue!30];
\draw (0,0.5) rectangle (1,0.75) [fill=blue!30];
    \draw[black!30] (-0.23,-0.23) grid (1.23,1.23);
        \foreach \x in {0,0.5,1} {
            \foreach \y in {0,0.25,0.5,0.75,1} {
                \draw (\x,\y) circle (0.7mm) ;
            }
        }
        \foreach \x in {0,0.5,1} {
            \foreach \y in {0,0.5,1} {
                \draw (\x,\y) circle (0.7mm) [fill=black];
            }
        }
    \draw (0,0) rectangle (1,1);
\node[below] at (0.6,-0.2) {$\widetilde{\Lambda}^{\circ}$};
\end{tikzpicture}
\end{center}
\end{minipage}
\begin{minipage}{0.5\columnwidth}
\noindent
(iv) If $(a,b) = (0,1)$, then \\
$\widetilde{\Lambda} = 4 \mathbb{Z} \times \mathbb{Z}$, $\widetilde{\Lambda}^{\circ} = \mathbb{Z} \times \tfrac{1}{4} \mathbb{Z}$.
\begin{center}
\begin{tikzpicture}[scale=1.1]
    \draw[black!30] (-0.23,-0.23) grid (4.23,2.23);
        \foreach \x in {0,4} {
            \foreach \y in {0,1,2} {
                \draw (\x,\y) circle (0.7mm) [fill=black];
            }
        }
    \draw (0,0) rectangle (1,1);
\draw (0,1) circle (0.7mm) [fill=red];
\node[below] at (2,-0.2) {$\widetilde{\Lambda}$};
\end{tikzpicture}
\hspace{5mm}
\begin{tikzpicture}[scale=1.1]
\draw (0,0) rectangle (0.5,1) [fill=blue!30];
    \draw[black!30] (-0.23,-0.23) grid (1.23,1.23);
        \foreach \x in {0,0.5,1} {
            \foreach \y in {0,0.25,0.5,0.75,1} {
                \draw (\x,\y) circle (0.7mm) ;
            }
        }
        \foreach \x in {0,1} {
            \foreach \y in {0,0.25,0.5,0.75,1} {
                \draw (\x,\y) circle (0.7mm) [fill=black];
            }
        }
    \draw (0,0) rectangle (1,1);
\node[below] at (0.6,-0.2) {$\widetilde{\Lambda}^{\circ}$};
\end{tikzpicture}
\end{center}
\end{minipage}

\noindent
\begin{minipage}{0.5\columnwidth}
\noindent
(v) $(a,b) = (1,1)$ or $(3,1)$, then \\
$\widetilde{\Lambda} = \left( {2 \atop 0} \, {1 \atop 1} \right) \mathbb{Z}^2$, $\widetilde{\Lambda}^{\circ} = \tfrac{1}{2} \left( {1 \atop 1} \, {2 \atop 0} \right) \mathbb{Z}^2$ (cf.~(\ref{eqn_adjoint_lattice_gen_matrix_form})).
\begin{center}
\begin{tikzpicture}[scale=1.1]
    \draw[black!30] (-0.23,-0.23) grid (4.23,2.23);
        \foreach \x in {0,2,4} {
            \foreach \y in {0,2} {
                \draw (\x,\y) circle (0.7mm) [fill=black];
            }
        }
        \foreach \x in {1,3} {
            \foreach \y in {1} {
                \draw (\x,\y) circle (0.7mm) [fill=black];
            }
        }
    \draw (0,0) rectangle (1,1);
\node[below] at (2,-0.2) {$\widetilde{\Lambda}$};
\draw (1,1) circle (0.7mm) [fill=red];
\end{tikzpicture}
\hspace{5mm}
\begin{tikzpicture}[scale=1.1]
\draw (0,0) rectangle (0.5,0.25) [fill=blue!30];
\draw (0.5,0.5) rectangle (1,0.75) [fill=blue!30];
    \draw[black!30] (-0.23,-0.23) grid (1.23,1.23);
        \foreach \x in {0,0.5,1} {
            \foreach \y in {0,0.25,0.5,0.75,1} {
                \draw (\x,\y) circle (0.7mm) ;
            }
        }
        \foreach \x in {0,1} {
            \foreach \y in {0,1} {
                \draw (\x,\y) circle (0.7mm) [fill=black];
            }
        }
        \foreach \x in {0.5} {
            \foreach \y in {0.5} {
                \draw (\x,\y) circle (0.7mm) [fill=black];
            }
        }
    \draw (0,0) rectangle (1,1);
\node[below] at (0.6,-0.2) {$\widetilde{\Lambda}^{\circ}$};
\end{tikzpicture}
\end{center}
\end{minipage}
\begin{minipage}{0.5\columnwidth}
\noindent
(vi) If $(a,b) = (2,1)$, then \\
$\widetilde{\Lambda} = \left( {4 \atop 0} \, {2 \atop 1} \right) \mathbb{Z}^2$, $\widetilde{\Lambda}^{\circ} = \tfrac{1}{4} \left( {2 \atop 1} \, {4 \atop 0} \right) \mathbb{Z}^2$ (cf.~(\ref{eqn_adjoint_lattice_gen_matrix_form})).
\begin{center}
\begin{tikzpicture}[scale=1.1]
    \draw[black!30] (-0.23,-0.23) grid (4.23,2.23);
        \foreach \x in {0,4} {
            \foreach \y in {0,2} {
                \draw (\x,\y) circle (0.7mm) [fill=black];
            }
        }
    \draw (0,0) rectangle (1,1);
\node[below] at (2,-0.2) {$\widetilde{\Lambda}$};
\draw (2,1) circle (0.7mm) [fill=red];
\end{tikzpicture}
\hspace{5mm}
\begin{tikzpicture}[scale=1.1]
\draw (0,0) rectangle (0.5,0.25) [fill=blue!30];
\draw (0,0.5) rectangle (0.5,0.75) [fill=blue!30];
\draw (0.5,0.25) rectangle (1,0.5) [fill=blue!30];
\draw (0.5,0.75) rectangle (1,1) [fill=blue!30];
    \draw[black!30] (-0.23,-0.23) grid (1.23,1.23);
        \foreach \x in {0,0.5,1} {
            \foreach \y in {0,0.25,0.5,0.75,1} {
                \draw (\x,\y) circle (0.7mm) ;
            }
        }
        \foreach \x in {0,1} {
            \foreach \y in {0,0.5,1} {
                \draw (\x,\y) circle (0.7mm) [fill=black];
            }
        }
        \foreach \x in {0.5} {
            \foreach \y in {0.25,0.75} {
                \draw (\x,\y) circle (0.7mm) [fill=black];
            }
        }
    \draw (0,0) rectangle (1,1);
\node[below] at (0.6,-0.2) {$\widetilde{\Lambda}^{\circ}$};
\end{tikzpicture}
\end{center}
\end{minipage}

\noindent
From Theorem \ref{thm_Gabor_integer_lattice}, we have that $\mathcal{G}(\varphi, 4 \mathbb{Z} \times 2 \mathbb{Z})$ is $\widetilde{\Lambda}$-invariant if and only if for a.e.~$(x,\omega)$, $Z \varphi (x , \omega ) \neq 0$ implies that
\begin{align*}
Z \varphi ( x + \tfrac{r}{2} , \omega + \tfrac{s}{4} ) = 0
\quad \text{for all} \; ( \tfrac{r}{2} , \tfrac{s}{4} ) \in (\tfrac{1}{2} \mathbb{Z} \times \tfrac{1}{4} \mathbb{Z}) \backslash \widetilde{\Lambda}^{\circ} .
\end{align*}
Moreover in this case, if $Z \varphi (x , \omega ) \neq 0$ a.e.~on $B^{(0)} \cap [0,1)^2$, then it follows that $Z \varphi (x , \omega ) = 0$ a.e.~on $[0,1)^2 \backslash B^{(0)}$.

Next, let $\widetilde{\Lambda} \subset \mathbb{R}^2$ be any lattice such that $4 \mathbb{Z} \times 2 \mathbb{Z} \subseteq \widetilde{\Lambda} \subseteq \mathbb{Z}^2$.
There are two cases which are not treated above: $\widetilde{\Lambda} = 2 \mathbb{Z} \times \mathbb{Z}$ and $\widetilde{\Lambda} = \mathbb{Z}^2$.

\medskip

\noindent
\begin{minipage}{0.5\columnwidth}
\noindent
(vii) $\widetilde{\Lambda} = 2 \mathbb{Z} \times \mathbb{Z}$, $\widetilde{\Lambda}^{\circ} = \mathbb{Z} \times \tfrac{1}{2} \mathbb{Z}$.
\begin{center}
\begin{tikzpicture}[scale=1.1]
    \draw[black!30] (-0.23,-0.23) grid (4.23,2.23);
        \foreach \x in {0,2,4} {
            \foreach \y in {0,1,2} {
                \draw (\x,\y) circle (0.7mm) [fill=black];
            }
        }
    \draw (0,0) rectangle (1,1);
\node[below] at (2,-0.2) {$\widetilde{\Lambda}$};
\draw (0,1) circle (0.7mm) [fill=red];
\draw (2,0) circle (0.7mm) [fill=red];
\end{tikzpicture}
\hspace{5mm}
\begin{tikzpicture}[scale=1.1]
\draw (0,0) rectangle (0.5,0.25) [fill=blue!30];
\draw (0,0.5) rectangle (0.5,0.75) [fill=blue!30];
    \draw[black!30] (-0.23,-0.23) grid (1.23,1.23);
        \foreach \x in {0,0.5,1} {
            \foreach \y in {0,0.25,0.5,0.75,1} {
                \draw (\x,\y) circle (0.7mm) ;
            }
        }
        \foreach \x in {0,1} {
            \foreach \y in {0,0.5,1} {
                \draw (\x,\y) circle (0.7mm) [fill=black];
            }
        }
    \draw (0,0) rectangle (1,1);
\node[below] at (0.6,-0.2) {$\widetilde{\Lambda}^{\circ}$};
\end{tikzpicture}
\end{center}
\end{minipage}
\begin{minipage}{0.5\columnwidth}
\noindent
(viii) $\widetilde{\Lambda} = \widetilde{\Lambda}^{\circ} = \mathbb{Z}^2$.
\begin{center}
\begin{tikzpicture}[scale=1.1]
    \draw[black!30] (-0.23,-0.23) grid (4.23,2.23);
        \foreach \x in {0,1,2,3,4} {
            \foreach \y in {0,1,2} {
                \draw (\x,\y) circle (0.7mm) [fill=black];
            }
        }
    \draw (0,0) rectangle (1,1);
\node[below] at (2,-0.2) {$\widetilde{\Lambda}$};
\draw (0,1) circle (0.7mm) [fill=red];
\draw (1,0) circle (0.7mm) [fill=red];
\end{tikzpicture}
\hspace{5mm}
\begin{tikzpicture}[scale=1.1]
\draw (0,0) rectangle (0.5,0.25) [fill=blue!30];
    \draw[black!30] (-0.23,-0.23) grid (1.23,1.23);
        \foreach \x in {0,0.5,1} {
            \foreach \y in {0,0.25,0.5,0.75,1} {
                \draw (\x,\y) circle (0.7mm) ;
            }
        }
        \foreach \x in {0,1} {
            \foreach \y in {0,1} {
                \draw (\x,\y) circle (0.7mm) [fill=black];
            }
        }
    \draw (0,0) rectangle (1,1);
\node[below] at (0.6,-0.2) {$\widetilde{\Lambda}^{\circ}$};
\end{tikzpicture}
\end{center}
\end{minipage}
When $\widetilde{\Lambda} = \mathbb{Z}^2$, Corollary \ref{cor_Gabor_integer_lattice} states that $\mathcal{G}(\varphi, 4 \mathbb{Z} \times 2 \mathbb{Z})$ is $\mathbb{Z}^2$-invariant if and only if there exists a fundamental domain $D \subset [0,1)^{2}$ of the lattice $\tfrac{1}{2} \mathbb{Z} \times \tfrac{1}{4} \mathbb{Z}$ such that $Z \varphi (x , \omega ) = 0$ a.e.~on $[0,1)^{2} \backslash D$.
For example, as depicted in (viii), if $Z \varphi |_{[0,1)^2}$ is supported on $[0,\tfrac{1}{2}) \times [0,\tfrac{1}{4})$, equivalently, if $Z \varphi (x , \omega )$ is supported on $B^{(0)}$, then $\mathcal{G}(\varphi, 4 \mathbb{Z} \times 2 \mathbb{Z})$ is $\mathbb{Z}^2$-invariant.
\end{example}

\medskip

\begin{remark}[Shift-invariant spaces vs.~Gabor spaces --- Remarks \ref{rmk_SIS_extra_inv} and \ref{rmk_Gabor_extra_inv} revisited]
\label{rmk_revisit_SIS_Gabor_rmks}
\rm
$ $\\
\noindent
(i) Extra invariance of shift-invariant spaces \\
Let $\varphi \in L^2(\mathbb{R}^{2d})$ and let $\widetilde{\Gamma} \subset \mathbb{R}^{2d}$ be a proper super-lattice of $\mathbb{Z}^{2d}$, i.e., a lattice strictly containing $\mathbb{Z}^{2d}$. Then $\mathbb{Z}^{2d} \subsetneq \widetilde{\Gamma} \subseteq \frac{1}{m} \mathbb{Z}^d \times \frac{1}{n} \mathbb{Z}^d$ for some $m,n \in \mathbb{N}$ (cf.~Section \ref{subsec:SIS_extra_inv}),
Assume that $\mathcal{S}(\varphi, \mathbb{Z}^{2d})$ is invariant under translations by $\widetilde{\Gamma}$, that is, $\mathcal{S} (\varphi, \mathbb{Z}^{2d}) = \mathcal{S} (\varphi, \widetilde{\Gamma})$.
Then every function $f \in \mathcal{S} (\varphi, \mathbb{Z}^{2d})$ can be expressed in two different ways:
\begin{align*}
m(\xi, \omega) \, \widehat{\varphi}(\xi, \omega) = \widehat{f}(\xi, \omega) = \widetilde{m}(\xi, \omega) \, \widehat{\varphi}(\xi, \omega) ,
\end{align*}
where $m(\xi,\omega)$ is $\mathbb{Z}^{2d}$-periodic and $\widetilde{m}(\xi,\omega)$ is $\widetilde{\Gamma}^*$-periodic. Since the Fourier transforms $\widehat{\varphi}(\xi, \omega) , \widehat{f}(\xi, \omega) \in L^2(\mathbb{R}^{2d})$ are non-periodic, there is no other periodicity involved in the equation.
From the different periodicity of $m(\xi, \omega)$ and $\widetilde{m}(\xi, \omega)$, we get some constraints on the set of zeros of $\widehat{\varphi}(\xi, \omega)$.
Note that $\mathbb{Z}^{2d} \subsetneq \widetilde{\Gamma} \subseteq \frac{1}{m} \mathbb{Z}^d \times \frac{1}{n} \mathbb{Z}^d$ implies $m \mathbb{Z}^d \times n \mathbb{Z}^d \subseteq \widetilde{\Gamma}^* \subsetneq \mathbb{Z}^{2d}$.
With $m,n \in \mathbb{N}$ large, the lattice $\widetilde{\Gamma}$ has a large density which corresponds to $\widetilde{\Gamma}^*$ having a small density. \\[2mm]
(ii) Extra invariance of Gabor spaces \\
Let $\varphi \in L^2(\mathbb{R}^{2d})$ and let $\Lambda, \widetilde{\Lambda} \subseteq \mathbb{R}^{2d}$ be lattices satisfying $\Lambda \subsetneq \widetilde{\Lambda} \subseteq \mathbb{Z}^{2d}$ (so that $\mathbb{Z}^{2d} \subseteq \widetilde{\Lambda}^{\circ} \subsetneq {\Lambda}^{\circ}$).
Assume that $\mathcal{G}(\varphi, \Lambda)$ is $\widetilde{\Lambda}$-invariant, that is, $\mathcal{G}(\varphi, \Lambda) = \mathcal{G}(\varphi, \widetilde{\Lambda})$. Then every function $f$ in $\mathcal{G}(\varphi, \Lambda)$ can be expressed in two different ways:
$$
h ( x , \omega ) \,  Z \varphi ( x , \omega ) = Z f ( x , \omega ) = \widetilde{h} ( x , \omega ) \,  Z \varphi ( x , \omega ),
$$
where $h ( x , \omega )$ is ${\Lambda}^{\circ}$-periodic and $\widetilde{h} ( x , \omega )$ is $\widetilde{\Lambda}^{\circ}$-periodic.
Note that unlike the (non-periodic) Fourier transform, the Zak transform is quasi-periodic. Therefore, by replacement if necessary, $h ( x , \omega )$ and $\widetilde{h} ( x , \omega )$ can be assumed to be $\mathbb{Z}^{2d}$-periodic, which conforms to the condition that $\Lambda, \widetilde{\Lambda} \subseteq \mathbb{Z}^{2d}$ (equivalently, ${\Lambda}^{\circ}, \widetilde{\Lambda}^{\circ} \supseteq \mathbb{Z}^{2d}$).
This obviously limits the lattice $\widetilde{\Lambda}$ to be coarser than $\mathbb{Z}^{2d}$, whereas the lattice $\widetilde{\Gamma}$ discussed in (i) has no such restrictions and can be highly dense.
Similarly as in (i), we obtain some constraints on the set of zeros of $Z \varphi ( x , \omega )$ using the different periodicity of $h ( x , \omega )$ and $\widetilde{h} ( x , \omega )$.
%%% Similarly, some restrictions on the zeros of $Z \varphi ( x , \omega )$ are obtained from the different periodicity of $h ( x , \omega )$ and $\widetilde{h} ( x , \omega )$.

\medskip

%%% {\footnotesize
%%% {\normalsize
{\small
\begin{center}
\renewcommand{\arraystretch}{2}
\begin{tabular}{| c | c | c | c | c | c |}
\hline
Space & Invariance Lattice & Periodicity of Transform \\[-.3cm]
 & (Dual/Adjoint Lattice) & \\
\hline
$\mathcal{S} (\varphi, \mathbb{Z}^{2d})$ & $\mathbb{Z}^{2d} \subsetneq \widetilde{\Gamma} \subseteq \tfrac{1}{m} \mathbb{Z}^d \times \tfrac{1}{n} \mathbb{Z}^d$ &
$\widehat{\varphi}(\xi, \omega)$ is non-periodic \\[-.3cm]
 &  ($m \mathbb{Z}^d \times n \mathbb{Z}^d \subseteq \widetilde{\Gamma}^* \subsetneq \mathbb{Z}^{2d}$) & \\ \hline
$\mathcal{G}(\varphi, \Lambda)$ & $\Lambda \subsetneq \widetilde{\Lambda} \subseteq \mathbb{Z}^{2d}$ &
$Z \varphi ( x , \omega )$ is quasi-periodic \\[-.3cm]
&  ($\mathbb{Z}^{2d} \subseteq \widetilde{\Lambda}^{\circ} \subsetneq {\Lambda}^{\circ}$) & \\ \hline
\end{tabular}
\end{center}
}
\end{remark}

\medskip

\newcommand{\etalchar}[1]{$^{#1}$}
\def\cprime{$'$} \def\cprime{$'$}
\providecommand{\bysame}{\leavevmode\hbox to3em{\hrulefill}\thinspace}
\providecommand{\MR}{\relax\ifhmode\unskip\space\fi MR }
% \MRhref is called by the amsart/book/proc definition of \MR.
\providecommand{\MRhref}[2]{%
  \href{http://www.ams.org/mathscinet-getitem?mr=#1}{#2}
}
\providecommand{\href}[2]{#2}

%\bibliographystyle{amsalpha}
%\bibliography{cyu,gg}
%

\medskip

\renewcommand{\thetheorem}{A.\arabic{theorem}}
\setcounter{theorem}{0}

\section*{Appendix I}
\label{sec:appendix_I}

%%% [cf.~Proposition 2.1 in \cite{ACP11}]
\begin{proposition-app}
\label{prop_M_is_closed}
Let $V$ be a closed subspace of $L^2(\mathbb{R}^d)$ and $\Lambda \subset \mathbb{R}^{2d}$ be a lattice.
If $V$ is $\Lambda$-invariant, then $\mathcal{P} (V)$ is an additive closed subgroup of $\mathbb{R}^{2d}$ containing $\Lambda$.
\end{proposition-app}

To prove Proposition \ref{prop_M_is_closed}, we will use similar arguments as in the proof of \cite[Proposition 2.1]{ACP11}.
Recall that an additive semigroup is a nonempty set with an associative additive operation.
We need the following lemma which is proven for the case $\Gamma = \mathbb{Z}^d$ in \cite[Lemma 2.2]{ACP11}.

%%% [cf.~Lemma 2.2 in \cite{ACP11}]
\begin{lemma-app}
\label{lem_semigp_is_gp}
Let $\Gamma \subset \mathbb{R}^d$ be a lattice. If $H$ is a closed additive semigroup of $\mathbb{R}^d$ containing $\Gamma$, then $H$ is an additive group.
\end{lemma-app}

\begin{proof}
Let $\Pi$ be the quotient map from $\mathbb{R}^d$ onto $D = \mathbb{R}^d / \Gamma$. Here $D$ is a fundamental domain of the lattice $\Gamma$. Since $H$ is a semigroup containing $\Gamma$, we have $H + \Gamma = H$ where $H + \Gamma$ denotes the set $\{ h + \gamma \, : \, h \in H, ~ \gamma \in \Gamma \}$.
Indeed, $H + \Gamma \subseteq H$ comes from the fact that $H$ is closed under addition, and $H \subseteq H + \Gamma$ is due to the fact that $0 \in \Gamma$. Therefore,
\begin{align*}
\Pi^{-1} [ \Pi (H) ] &= \bigcup_{h \in \Pi(H)} h + \Gamma = \bigcup_{h \in H} h + \Gamma = H + \Gamma = H .
\end{align*}
This implies that $\Pi (H)$ is closed in $D$ and is therefore compact.

Since a compact semigroup of $D$ is necessarily a group \cite[Theorem 9.16]{HR63}, it follows that $\Pi (H) \subset D$ is a group and consequently $H$ is a group.
\end{proof}

\noindent \textit{Proof of Proposition \ref{prop_M_is_closed}.}\;
It is immediate from definition that $\Lambda \subseteq \mathcal{P} (V)$. To show that $\mathcal{P} (V)$ is closed, let $\{ (u_n,\eta_n) \}_{n=1}^{\infty} \subset \mathcal{P} (V)$ and $(u_0,\eta_0) \in \mathbb{R}^d \times \mathbb{R}^d$ be such that $\lim_{n \rightarrow} (u_n,\eta_n) = (u_0,\eta_0)$ in the usual product topology of $\mathbb{R}^d \times \mathbb{R}^d$. Then for every $f \in \mathcal{G}(\varphi, \Lambda)$, we have
\begin{align*}
\lefteqn{\| \pi(u_n,\eta_n) f - \pi(u_0,\eta_0) f \|_2
= \| M_{\eta_n} T_{u_n} f - M_{\eta_0} T_{u_0} f \|_2 } \\
&\leq
\| M_{\eta_n} ( T_{u_n} f - T_{u_0} f ) \|_2
+ \| (M_{\eta_n} - M_{\eta_0}) T_{u_0} f \|_2 \\
&=
\left( \int_{\mathbb{R}^d}  | f(x - u_n) - f(x - u_0) |^2  \, dx \right)^{1/2}
+ \left(\int_{\mathbb{R}^d}  | e^{2\pi i \eta_n \cdot x} - e^{2\pi i \eta_0 \cdot x} |^2 \cdot | f(x - u_0) |^2  \, dx \right)^{1/2} \\
&\rightarrow 0 \quad \text{as} \;\; n \rightarrow 0 ,
\end{align*}
which implies that $\pi(u_0,\eta_0) f \in \overline{V} = V$.
Therefore, $\mathcal{P} (V)$ is closed.

Next, we show that $\mathcal{P} (V)$ is a semigroup of $\mathbb{R}^{2d}$. Let $(u,\eta), (u',\eta') \in \mathcal{P} (V)$. Then for any $f \in V$, we have $\pi(u,\eta) f \in V$ and in turn $\pi(u',\eta') [\pi(u,\eta) f] \in V$.
Noting that $\pi(u + u', \eta + \eta') = e^{2 \pi i \eta \cdot u'} \, \pi(u',\eta') \circ \pi(u,\eta)$ (cf.~(\ref{eqn_commut_rel})), we have $\pi(u + u', \eta + \eta') f = e^{2 \pi i \eta \cdot u'} \cdot \pi(u',\eta') [\pi(u,\eta) f] \in V$, therefore, $(u + u', \eta + \eta') \in \mathcal{P} (V)$.
This shows that $\mathcal{P} (V)$ is closed under the additive operation given by $(u,\eta) + (u',\eta') = (u + u', \eta + \eta')$. It is easy to check that this operation is associative, thus $\mathcal{P} (V)$ is a semigroup of $\mathbb{R}^{2d}$.

Finally, since $\mathcal{P} (V)$ is a closed semigroup of $\mathbb{R}^{2d}$ containing a lattice $\Lambda$, we conclude from Lemma \ref{lem_semigp_is_gp} that $\mathcal{P} (V)$ is a group.
   \hfill $\Box$ %\hfill $\square$ %end of proof

\section*{Appendix II -- Proof of Lemma \ref{lem_Gabor_ftn_exp_lambda_integer}}
\label{sec:appendix_II}

To prove Lemma \ref{lem_Gabor_ftn_exp_lambda_integer}, we will use arguments similar to the proof of \cite[Theorem 2.14]{bdr}.

For $f,g \in L^2(\mathbb{R}^d)$ and a lattice $\Lambda \subseteq \mathbb{Z}^{2d}$, we define the $\Lambda^{\circ}$-periodic function
\begin{align*}
[f,g]_{\Lambda^{\circ}} (x,\omega) := \sum_{(u,\eta) \in {\Lambda}^{\circ} \cap [0,1)^{2d}} Z f (x + u , \omega + \eta) \, \overline{Z g (x + u , \omega + \eta)} .
\end{align*}
It is clear that $[f,f]_{\Lambda^{\circ}} (x,\omega) \geq 0$ and by the Cauchy-Schwarz inequality, $| [f,g]_{\Lambda^{\circ}} (x,\omega)|^2 \leq [f,f]_{\Lambda^{\circ}} (x,\omega) \cdot [g,g]_{\Lambda^{\circ}} (x,\omega)$.

Let $\varphi \in L^2(\mathbb{R}^d)$ and let $\Lambda \subseteq \mathbb{Z}^{2d}$ be a lattice.
We denote by $P = P_{\mathcal{G}(\varphi, \Lambda)}$ the orthogonal projection from $L^2(\mathbb{R}^d)$ onto $\mathcal{G}(\varphi, \Lambda)$, and by $Q = P_{Z[\mathcal{G}(\varphi, \Lambda)]}$ the orthogonal projection from $L^2 ([0,1)^{2d})$ onto $Z[\mathcal{G}(\varphi, \Lambda)]$.
Then $Z P = Q Z$.
Indeed, for any fixed $f \in L^2(\mathbb{R}^d)$, we have
\begin{align*}
\| f - Pf \| \leq \| f - g \|
\quad \text{for all} \;\; g \in \mathcal{G}(\varphi, \Lambda) ,
\end{align*}
and since the Zak transform is unitary, this is equivalent to
\begin{align*}
\| Zf - Z(Pf) \|_{L^2 ([0,1)^{2d})} \leq \| Zf - Zg \|_{L^2 ([0,1)^{2d})}
\quad \text{for all} \;\; Zg \in Z[\mathcal{G}(\varphi, \Lambda)] ,
\end{align*}
By the uniqueness of best approximation in $L^2 ([0,1)^{2d})$, it follows that $Z(Pf) = P_{Z[\mathcal{G}(\varphi, \Lambda)]} (Zf) = Q(Zf)$, and consequently, $Z P = Q Z$.

\begin{proposition-app}
\label{prop_append_II_orthog_proj}
Let $\varphi \in L^2(\mathbb{R}^d)$ and let $\Lambda \subseteq \mathbb{Z}^{2d}$ be a lattice.
For any $f \in L^2(\mathbb{R}^d)$, we have $Z(Pf) (x,\omega) = h_f (x,\omega) Z \varphi (x,\omega)$,
where
\begin{align*}
h_f (x,\omega) :=
\left\{
   \begin{array}{ll}
   [f,\varphi]_{\Lambda^{\circ}} (x,\omega) /[\varphi,\varphi]_{\Lambda^{\circ}} (x,\omega) \quad &
   \text{on } \;\; \supp [\varphi,\varphi]_{\Lambda^{\circ}} , \\
   0  \quad & \text{otherwise} .
   \end{array}
   \right.
\end{align*}
\end{proposition-app}

\begin{proof}
Note that $h_f (x,\omega) Z \varphi (x,\omega) \in L^2 ([0,1)^{2d})$ for any $f \in L^2(\mathbb{R}^d)$. In fact, $\Psi :  L^2 ([0,1)^{2d}) \rightarrow L^2 ([0,1)^{2d})$ defined by $\Psi (Zf) (x,\omega) = h_f (x,\omega) Z \varphi (x,\omega)$, $f \in L^2(\mathbb{R}^d)$, is a bounded linear operator with $\| \Psi \| \leq 1$.
To see this, we compute
\begin{align*}
\| h_f Z \varphi \|_{L^2 ([0,1)^{2d})}^2
&= \iint_{[0,1)^{2d}} |h_f (x,\omega)|^2 Z \varphi (x,\omega)|^2 \, dx d\omega
= \iint_{[0,1)^{2d}} \left|
\frac{[f,\varphi]_{\Lambda^{\circ}} (x,\omega)  }{[\varphi,\varphi]_{\Lambda^{\circ}} (x,\omega)  } \right|^2  |Z \varphi (x,\omega)|^2 \, dx d\omega \\
&= \iint_{D} \left|
\frac{[f,\varphi]_{\Lambda^{\circ}} (x,\omega)  }{[\varphi,\varphi]_{\Lambda^{\circ}} (x,\omega)  } \right|^2 \sum_{(u,\eta) \in {\Lambda}^{\circ} \cap [0,1)^{2d}} |Z \varphi (x + u , \omega + \eta)|^2 \, dx d\omega \\
&= \iint_{D}
\frac{|[f,\varphi]_{\Lambda^{\circ}} (x,\omega) |^2 }{[\varphi,\varphi]_{\Lambda^{\circ}} (x,\omega)  } \chi_{\supp [\varphi,\varphi]_{\Lambda^{\circ}}} (x,\omega) \, dx d\omega \\
&\leq \iint_{D}
[f,f]_{\Lambda^{\circ}} (x,\omega) \, dx d\omega
= \iint_{[0,1)^{2d}} |Z f (x,\omega)|^2 \, dx d\omega
= \| Z f \|_{L^2 ([0,1)^{2d})}^2 ,
\end{align*}
where we have used the fact that $\supp Z \varphi \subseteq \supp [\varphi,\varphi]_{\Lambda^{\circ}}$. Here $D \subset \mathbb{R}^{2d}$ is a fundamental domain of the lattice $\Lambda^{\circ}$.

We will show that $\Psi$ is the orthogonal projection from $L^2 ([0,1)^{2d})$ onto $Z[\mathcal{G}(\varphi, \Lambda)]$, i.e., $\Psi = Q$. It then follows immediately that $Z(Pf) (x,\omega) = Q (Zf) (x,\omega) = \Psi (Zf) (x,\omega) = h_f (x,\omega) Z \varphi (x,\omega)$ for any $f \in L^2(\mathbb{R}^d)$.

First, we verify that $\Psi = 0$ on $Z[\mathcal{G}(\varphi, \Lambda)]^{\perp}$.
Since the Zak transform is unitary, we have $Z[\mathcal{G}(\varphi, \Lambda)]^{\perp} = Z[\mathcal{G}(\varphi, \Lambda)^{\perp}]$ so it is enough to check that $h_f (x,\omega) Z \varphi (x,\omega) = 0$ for all $f \in \mathcal{G}(\varphi, \Lambda)^{\perp}$.
Using (\ref{Zak_on_pi_quasi_peri}), we obtain
\begin{align*}
f \in \mathcal{G}(\varphi, \Lambda)^{\perp}
&\Leftrightarrow \;\; f \perp \pi(u,\eta) \varphi \quad \text{for all} \;\; (u,\eta) \in \Lambda \\
&\Leftrightarrow \;\; Zf \perp Z(\pi(u,\eta) \varphi) \quad \text{for all} \;\; (u,\eta) \in \Lambda \\
&\Leftrightarrow \;\; 0 = \iint_{[0,1)^{2d}}
Zf (x,\omega) \overline{Z \varphi ( x , \omega )} \, e^{- 2 \pi i (\eta \cdot x - u \cdot \omega)} \, dx d\omega \quad \text{for all} \;\; (u,\eta) \in \Lambda \\
&\Leftrightarrow \;\; 0 = \iint_{D}
[f,\varphi]_{\Lambda^{\circ}} (x,\omega) \, e^{- 2 \pi i (\eta \cdot x - u \cdot \omega)} \, dx d\omega \quad \text{for all} \;\; (u,\eta) \in \Lambda \\
&\Leftrightarrow \;\;
[f,\varphi]_{\Lambda^{\circ}} (x,\omega) = 0
\quad \text{a.e.~on} \; D , \;\; \text{(thus, a.e.~on $\mathbb{R}^{2d}$)},
\end{align*}
where we have used the fact that $\{ e^{- 2 \pi i (\eta \cdot x - u \cdot \omega)} \}_{(u,\eta) \in \Lambda}$ is a Fourier basis for $L^2(D)$. This shows that for $f \in \mathcal{G}(\varphi, \Lambda)^{\perp}$, we have $h_f (x,\omega) = 0$ and therefore $h_f (x,\omega) Z \varphi (x,\omega) = 0$.

Next, in order to prove that $\Psi = id$ on $Z[\mathcal{G}(\varphi, \Lambda)]$, it is enough to show that $\Psi ( Z (\pi(u,\eta) \varphi)) = Z (\pi(u,\eta) \varphi)$ for all $(u,\eta) \in \Lambda$. Let us fix any $(u,\eta) \in \Lambda$.
Then for $(x,\omega) \in \supp [\varphi,\varphi]_{\Lambda^{\circ}}$,
\begin{align*}
h_{Z (\pi(u,\eta) \varphi)} (x,\omega)
&=
\frac{[\pi(u,\eta) \varphi,\varphi]_{\Lambda^{\circ}} (x,\omega) }{[\varphi,\varphi]_{\Lambda^{\circ}} (x,\omega)} \\
&=
\frac{\sum_{(u',\eta') \in {\Lambda}^{\circ} \cap [0,1)^{2d}}
e^{2 \pi i [\eta \cdot (x + u') - u \cdot (\omega + \eta')]} Z \varphi (x + u' , \omega + \eta') \, \overline{Z \varphi (x + u' , \omega + \eta')} }{[\varphi,\varphi]_{\Lambda^{\circ}} (x,\omega)} \\
&=
\frac{e^{- 2 \pi i (\eta \cdot x - u \cdot \omega)} \, [\varphi,\varphi]_{\Lambda^{\circ}} (x,\omega) }{[\varphi,\varphi]_{\Lambda^{\circ}} (x,\omega)} \\
&=
e^{- 2 \pi i (\eta \cdot x - u \cdot \omega)} ,
\end{align*}
where we have used (\ref{Zak_on_pi_quasi_peri}) and the fact that $e^{2 \pi i (\eta \cdot u' - u \cdot \eta')} = 1$ for all $(u',\eta') \in {\Lambda}^{\circ}$.
Since $\supp Z \varphi \subseteq \supp [\varphi,\varphi]_{\Lambda^{\circ}}$, we obtain
\begin{align*}
h_{Z (\pi(u,\eta) \varphi)} (x,\omega) Z \varphi (x,\omega)
= e^{- 2 \pi i (\eta \cdot x - u \cdot \omega)} Z \varphi (x,\omega) = Z (\pi(u,\eta) \varphi) (x,\omega),
\end{align*}
which means that $\Psi ( Z (\pi(u,\eta) \varphi)) = Z (\pi(u,\eta) \varphi)$. This completes the proof.
\end{proof}

\noindent \textit{Proof of Lemma \ref{lem_Gabor_ftn_exp_lambda_integer}.}\;
If $f \in \mathcal{G}(\varphi, \Lambda)$, then $Pf = f$ so that Proposition \ref{prop_append_II_orthog_proj} gives $Zf (x,\omega) = h_f (x,\omega) Z \varphi (x,\omega)$, where $h_f (x,\omega)$ is ${\Lambda}^{\circ}$-periodic.
Conversely, assume that $f \in L^2(\mathbb{R}^d)$ is such that $Z f ( x , \omega ) = h ( x , \omega ) \,  Z \varphi ( x , \omega )$ for some ${\Lambda}^{\circ}$-periodic function $h ( x , \omega )$.
Then
\begin{align*}
[f,\varphi]_{\Lambda^{\circ}} (x,\omega)
&= \sum_{(u,\eta) \in {\Lambda}^{\circ} \cap [0,1)^{2d}} Z f (x + u , \omega + \eta) \, \overline{Z \varphi (x + u , \omega + \eta)}
=  h ( x , \omega ) \, [\varphi,\varphi]_{\Lambda^{\circ}} (x,\omega) ,
\end{align*}
so that $h_f (x,\omega) = h (x,\omega)$ on the support of $[\varphi,\varphi]_{\Lambda^{\circ}}$. Since $\supp Z \varphi \subseteq \supp [\varphi,\varphi]_{\Lambda^{\circ}}$, it follows that $Z(Pf) (x,\omega) = h_f (x,\omega) Z \varphi (x,\omega) = h (x,\omega) Z \varphi (x,\omega) = Z f ( x , \omega )$. Therefore, $Pf = f$ which means that $f \in \mathcal{G}(\varphi, \Lambda)$. This completes the proof.
\hfill $\Box$ %\hfill $\square$ %end of proof

\bigskip

\renewcommand{\thetheorem}{\arabic{theorem}}

\end{document}